\newtheorem{theorem}{Theorem}[section]
\newtheorem{lemma}[theorem]{Lemma}
\newtheorem{proposition}[theorem]{Proposition}
\newtheorem{corollary}[theorem]{Corollary}
\theoremstyle{definition}
\newtheorem{example}[theorem]{Example}
\theoremstyle{remark}
\newtheorem{remark}[theorem]{\textbf{Remark}}
\numberwithin{equation}{section}
\begin{document}

\title[Bounds of numerical radius of bounded linear operators]{Bounds of numerical radius of bounded linear operators using $t$-Aluthge transform}


\author{Santanu Bag, Pintu Bhunia and Kallol Paul}

\address{(Bag) Department of Mathematics, Vivekananda College For Women, Barisha, Kolkata 700008, India}
\email{santanumath84@gmail.com}

\address{(Bhunia) Department of Mathematics, Jadavpur University, Kolkata 700032, West Bengal, India}
\email{pintubhunia5206@gmail.com}

\address{(Paul) Department of Mathematics, Jadavpur University, Kolkata 700032, West Bengal, India}
\email{kalloldada@gmail.com}







\keywords{ Numerical radius; Aluthge transform; Bounded linear operator; Operator inequality}

\subjclass{Primary 47A12, 15A60; Secondary 47A30}

\thanks{The authors would like to thank the referees for their insightful suggestions that helped us to improve this article. The research of Pintu Bhunia would like to thank UGC, Govt. of India for the financial support in the form of junior research fellowship. Prof. Kallol Paul would like to thank RUSA 2.0, Jadavpur University for  partial support} 

\begin{abstract}
       	We develop a number of inequalities to obtain bounds for the numerical radius of a bounded linear operator defined on a complex Hilbert space using the properties of $t$-Aluthge transform. We show that the bounds obtained are sharper than the existing bounds.
\end{abstract}

\maketitle



\section{Introduction}

       \noindent Let $B(\mathbb{H})$ denote the $C^*$-algebra of all bounded linear operators defined on a complex Hilbert space $\mathbb{H}$. For $T\in B(\mathbb{H})$, the numerical range of $T$ is defined as $$W(T)=\{\langle Tx,x\rangle:x \in \mathbb{H}, \|x\|=1 \}.$$ The numerical radius, $w(T)$, is defined as the radius of the smallest circle with centered at the origin and containing the numerical range, i.e., $$w(T) =  \sup\{|\lambda|:\lambda \in W(T)\}.$$ The Crawford number of $T$ is defined as $$m(T)=\inf\{|\lambda|:\lambda \in W(T)\}.$$  The Cartesian decomposition of $T$ is given by $T=Re(T)+{\rm i}~Im(T)$, where $Re(T)=\frac{T+T^*}{2}$ and $Im(T)=\frac{T-T^*}{2{\rm i}}$. The spectral radius of $T$ is defined as $$r(T)=\sup\{|\lambda|:\lambda \in \sigma(T)\}$$ where $\sigma(T)$ is the collection of all spectral values of $T$. It is well-known that $w(T)$ defines a norm on $B(\mathbb{H})$,  which is equivalent to the operator norm $\|.\|$,  satisfying  the following inequality  
       \[ \frac{1}{2}\|T\|\leq w(T)\leq \|T\|. \]
       The first inequality becomes an equality if $T^2=0$ and the second inequality becomes an equality if $T$ is normal. 
       Various mathematicians \cite{BBP,CM, GW, FK2,FK1,PB,Y,ZFW} have studied and improved on the numerical radius inequality over the years using different techniques. One of the substantive improvement of the upper bound of the numerical radius was done  by Kittaneh \cite[Th. 1]{FK1}, in which he proved that 
       \begin{eqnarray}\label {number1}
       w(T) & \leq & \frac{1}{2}\left(\|T\|+\|T^2\|^{\frac{1}{2}}\right).
       \end{eqnarray}
       Using the Cartesian decomposition of an operator, Kittaneh \cite[Th. 1]{FK2} also proved that 
       \begin{eqnarray}\label {number2}
       \frac{1}{4}\|T^* T+TT^*\| &\leq& w^2(T)\leq \frac{1}{2}\|T^* T+TT^*\|.
       \end{eqnarray}
       For $T\in B(\mathbb{H})$, the Aluthge  transform  of $T$, denoted as $\widetilde{T}$, is defined as \[\widetilde{T}=|T|^{\frac{1}{2}}U|T|^{\frac{1}{2}},\] where $|T|=(T^{*}T)^{\frac{1}{2}}$ and $U$ is the partial isometry associated with the polar decomposition of $T$ and so $T=U|T|,$ $ \ker T=\ker U.$ It follows easily from the definition of  $\widetilde{T}$ that  $\|\widetilde{T}\| \leq \|T\|$ and  $r(\widetilde{T})= r(T)$, also  $w(\widetilde{T}) \leq w(T)$ (see \cite{JKP}). Okubo \cite{O} generalized the Aluthge transform, known as the $t$-Aluthge transform as follows: \\
       For $t \in [0,1]$, the $t$-Aluthge transform is defined by,
       \[\widetilde{T_t}=|T|^{t}U|T|^{1-t}.\]
       Here, $|T|^0$ is defined as $U^{*}U$. In particular, $\widetilde{T_0}=U^{*}U^2|T|$, $\widetilde{T_1}=|T|UU^{*}U=|T|U$, $\widetilde{T_{\frac{1}{2}}}=|T|^{\frac{1}{2}}U|T|^{\frac{1}{2}}=\widetilde{T}$ (the Aluthge transform of $T$).\\
       Using the Aluthge transform, Yamazaki in \cite[Th. 2.1]{Y} proved that if  $T\in B(\mathbb{H})$, then 
       \begin{eqnarray} \label {number3}
       w(T) &\leq & \frac{1}{2}\left(\|T\|+w(\widetilde{T})\right).
       \end{eqnarray}
       He also proved that this inequality is better than inequality (\ref{number1}) obtained by Kittaneh \cite[Th. 1]{FK1}. 
       Abu-Omar and Kittaneh  \cite[Th. 3.2]{AF2} improved on inequality (\ref{number3}) using $t$-Aluthge transform to prove that 
       \begin{eqnarray} \label {number4}
       w(T) &\leq & \frac{1}{2}\big(\|T\|+\min_{t\in [0,1]}w(\widetilde{T_t})\big).
       \end{eqnarray}
       Clearly  inequality (\ref{number4}) is sharper  than (\ref{number3}) and hence (\ref{number1}).  We observe that inequalities  (\ref{number1}) and (\ref{number2})  as well as   (\ref{number4}) and (\ref{number2}) are not comparable, in general. In this paper, we develop a number of inequalities using the properties  of $t$-Aluthge transform. We show that our inequalities improve inequalities  (\ref{number1}), (\ref{number2}) and (\ref{number3}).  We also obtain an upper bound for the numerical radius and show by an example that the bound is better than that obtained in inequality (\ref{number4}) for some certain operators. 
        
\section{Main results}
We begin this section with two  notations $H_{\theta}$  and $K_{\theta}$, defined as follows:\\
For $T \in B(\mathbb{H})$ and $\theta \in \mathbb{R},$  $H_{\theta}:=\textit{Re}(e^{{\rm i} \theta}T)$ and $K_{\theta}:=\textit{Im}(e^{{\rm i} \theta}T)$.  

The following lemma (see \cite{Y}) will be used repeatedly to reach our goal in this present article.

\begin{lemma}\label{lemma:htheta}
	Let $T\in B(\mathbb{H}).$ Then
	\[w(T)=\sup_{\theta \in \mathbb{R}}\|H_\theta\|=\sup_{\theta \in \mathbb{R}}\|\textit{Re}(e^{{\rm i} \theta}T)\|.\]
	Replacing $T$ by $\rm iT$ in the above equation, we have
	\[w(T)=\sup_{\theta \in \mathbb{R}}\|K_\theta\|=\sup_{\theta \in \mathbb{R}}\|\textit{Im}(e^{{\rm i} \theta}T)\|.\]
\end{lemma}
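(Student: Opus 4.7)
The plan is to establish the first equality by proving the two inequalities separately, and then obtain the second equality as an immediate corollary by the substitution $T \mapsto \mathrm{i}T$.

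First, for the inequality $w(T)\le \sup_{\theta\in\mathbb{R}}\|H_\theta\|$, I would take an arbitrary unit vector $x\in\mathbb{H}$ and write the complex number $\langle Tx,x\rangle$ in polar form as $|\langle Tx,x\rangle|\,e^{-\mathrm{i}\theta}$ for a suitable $\theta=\theta(x)\in\mathbb{R}$. Then $e^{\mathrm{i}\theta}\langle Tx,x\rangle=|\langle Tx,x\rangle|$ is a non-negative real number, so it coincides with its real part:
\[
|\langle Tx,x\rangle| \;=\; \mathrm{Re}\langle e^{\mathrm{i}\theta}Tx,x\rangle \;=\; \langle H_\theta x,x\rangle \;\le\; \|H_\theta\| \;\le\; \sup_{\theta\in\mathbb{R}}\|H_\theta\|.
\]
Taking the supremum over all unit $x$ yields the desired bound.

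For the reverse inequality, the key observation is that $H_\theta=\mathrm{Re}(e^{\mathrm{i}\theta}T)$ is self-adjoint, and for self-adjoint operators the operator norm coincides with the numerical radius. Hence
\[
\|H_\theta\| \;=\; w(H_\theta) \;=\; \sup_{\|x\|=1}\bigl|\mathrm{Re}\langle e^{\mathrm{i}\theta}Tx,x\rangle\bigr| \;\le\; \sup_{\|x\|=1}\bigl|\langle e^{\mathrm{i}\theta}Tx,x\rangle\bigr| \;=\; w(e^{\mathrm{i}\theta}T) \;=\; w(T),
\]
using the rotation invariance of the numerical radius. Taking the supremum over $\theta$ then gives $\sup_\theta\|H_\theta\|\le w(T)$, completing the proof of the first identity.

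For the second identity, I would substitute $\mathrm{i}T$ for $T$ in the first identity. Since $w(\mathrm{i}T)=w(T)$ and
\[
\mathrm{Re}\bigl(e^{\mathrm{i}\theta}(\mathrm{i}T)\bigr) \;=\; \mathrm{Re}\bigl(\mathrm{i}e^{\mathrm{i}\theta}T\bigr) \;=\; -\,\mathrm{Im}(e^{\mathrm{i}\theta}T),
\]
we have $\|\mathrm{Re}(e^{\mathrm{i}\theta}(\mathrm{i}T))\|=\|K_\theta\|$, so the first identity applied to $\mathrm{i}T$ yields exactly the second. There is no real obstacle here; the only slightly delicate point is the standard fact that $\|S\|=w(S)$ when $S$ is self-adjoint, which underlies the reverse inequality above.
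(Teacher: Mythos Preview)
Your proof is correct and is the standard argument for this well-known identity. Note, however, that the paper does not actually supply its own proof of this lemma: it simply states the result with a reference to Yamazaki \cite{Y} and uses it as a tool throughout. So there is no ``paper's proof'' to compare against; your argument is exactly the one found in the literature (choose $\theta$ so that $e^{\mathrm{i}\theta}\langle Tx,x\rangle\ge 0$ for the forward inequality, and use $\|S\|=w(S)$ for self-adjoint $S$ together with $|\mathrm{Re}\,z|\le |z|$ for the reverse), and it fully justifies the lemma.
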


We next prove the following proposition which states that $ T^2=0$ and $\widetilde{T}_t=0$, for any $t\in [0,1]$ are equivalent. To achieve it, we need the Heinz inequality (see \cite {H}) given below.
\begin{lemma}\cite{H} \label{Heinz}
	Let $A,B,X\in B(\mathbb{H})$ where $A$ and $B$ be positive operators. Then \[\|A^rXB^r\|\leq \|AXB\|^r\|X\|^{1-r},\] where $r\in [0,1].$
\end{lemma}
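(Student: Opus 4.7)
The approach is complex interpolation via Hadamard's three-lines theorem, applied to the operator-valued function $F(z)=A^{z}XB^{z}$ on a vertical strip in $\mathbb{C}$. The claimed inequality is precisely log-convexity of $\|A^{r}XB^{r}\|$ in the real parameter $r$, with endpoint values $\|X\|$ at $r=0$ and $\|AXB\|$ at $r=1$, and Hadamard's theorem is exactly the mechanism that turns the boundary bounds on a bounded analytic operator-valued function into a log-convex interior bound.

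First I would reduce to the case in which $A$ and $B$ are invertible positive operators. Setting $A_{\varepsilon}:=A+\varepsilon I$ and $B_{\varepsilon}:=B+\varepsilon I$ for $\varepsilon>0$, the continuous functional calculus applied to $\lambda\mapsto \lambda^{r}$, which is continuous on $[0,\infty)$ for $r\in[0,1]$, gives $A_{\varepsilon}^{r}\to A^{r}$ and $B_{\varepsilon}^{r}\to B^{r}$ in operator norm, so both sides of the inequality are continuous in $\varepsilon$ and it suffices to treat invertible positive $A,B$. Under this assumption, define $F\colon S\to B(\mathbb{H})$ on the closed strip $S=\{z\in\mathbb{C}:0\le\operatorname{Re}(z)\le 1\}$ by $F(z)=A^{z}XB^{z}$, interpreting the complex powers through the Borel functional calculus. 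Since $\lambda\mapsto\lambda^{z}$ is entire in $z$ for each fixed $\lambda>0$, the function $F$ is analytic on the interior of $S$; uniform boundedness on $S$ follows because the spectra of $A$ and $B$ are bounded away from $0$ and $\|A^{i\operatorname{Im}(z)}\|=\|B^{i\operatorname{Im}(z)}\|=1$ (these operators are unitary, as $|\lambda^{is}|=1$ for $\lambda>0$).

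Next I would compute the boundary norms. On the line $\operatorname{Re}(z)=0$, unitarity of $A^{it}$ and $B^{it}$ gives $\|F(it)\|=\|A^{it}XB^{it}\|=\|X\|$. On the line $\operatorname{Re}(z)=1$, use $A^{1+it}=A^{it}A$ and $B^{1+it}=BB^{it}$ (commutativity inside the functional calculus of a single operator) to factor
\[F(1+it)=A^{it}(AXB)B^{it},\]
and unitarity again yields $\|F(1+it)\|=\|AXB\|$. The Banach-space-valued Hadamard three-lines theorem then gives, for each $r\in[0,1]$,
\[\|A^{r}XB^{r}\|=\|F(r)\|\le \|X\|^{1-r}\|AXB\|^{r},\]
and undoing the perturbation completes the proof. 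The main obstacle is the setup of the interpolation framework — establishing analyticity and a uniform bound on the strip, and invoking the operator-valued version of the three-lines theorem — after which identifying the correct boundary norms and reading off the bound is routine.
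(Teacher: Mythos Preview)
The paper does not supply its own proof of this lemma; it is quoted from Heinz's original paper \cite{H} as a known result and used as a black box. Your argument via complex interpolation and Hadamard's three-lines theorem is the classical proof of the Heinz inequality and is correct: the reduction to invertible $A,B$ by the perturbation $A+\varepsilon I$, $B+\varepsilon I$ (with norm convergence of the fractional powers via continuous functional calculus), the analyticity and uniform boundedness of $F(z)=A^{z}XB^{z}$ on the strip, and the identification of the boundary norms through unitarity of $A^{it}$ and $B^{it}$ are all standard and accurately carried out. One minor point you leave implicit is the passage from the scalar three-lines theorem to the operator-valued version, but this is routine (apply the scalar theorem to $z\mapsto\langle F(z)u,v\rangle$ for unit vectors $u,v$ and take the supremum).
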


\begin{proposition}\label{prop-1}

	Let $T\in B(\mathbb{H}).$ Then  (i) $T^2=0$ and (ii) $\widetilde{T}_t=0$, $t\in [0,1]$ are equivalent.

\end{proposition}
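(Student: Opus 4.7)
The plan is to reduce the equivalence to the simpler factored identity $|T|U|T|=0$. Using the polar form $T=U|T|$ one has $T^{2}=U|T|U|T|$; since $U^{*}U$ is the projection onto $(\ker U)^{\perp}=(\ker|T|)^{\perp}=\overline{\mathrm{ran}\,|T|}$, the identity $(U^{*}U)|T|=|T|$ holds, and applying $U^{*}$ on the left of $T^{2}=0$ collapses it to $|T|U|T|=0$. The converse is immediate, so $T^{2}=0 \iff |T|U|T|=0$.

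For the implication $T^{2}=0 \Rightarrow \widetilde{T}_{t}=0$ with $t\in(0,1)$, I would apply the Heinz inequality (Lemma \ref{Heinz}) with $A=B=|T|$, $X=U$, and $r=\min\{t,1-t\}\in(0,\tfrac12]$, obtaining
\[
\bigl\||T|^{r}U|T|^{r}\bigr\| \le \bigl\||T|U|T|\bigr\|^{r}\|U\|^{1-r}=0,
\]
so $|T|^{r}U|T|^{r}=0$. Since $t-r\ge 0$ and $(1-t)-r\ge 0$, both $|T|^{t-r}$ and $|T|^{(1-t)-r}$ are bounded, and the semigroup law $|T|^{a}|T|^{b}=|T|^{a+b}$ (which extends to $a$ or $b$ equal to $0$ via the identity $(U^{*}U)|T|^{c}=|T|^{c}$ for $c>0$) gives the factorisation
\[
\widetilde{T}_{t} = |T|^{t-r}\bigl(|T|^{r}U|T|^{r}\bigr)|T|^{(1-t)-r}=0.
\]
The two endpoints are handled directly from $|T|U|T|=0$: for $t=1$, the operator $|T|U$ vanishes on $\mathrm{ran}(|T|)$, hence by continuity on $\overline{\mathrm{ran}\,|T|}=(\ker U)^{\perp}$, and trivially on $\ker U$, so $\widetilde{T}_{1}=|T|U=0$; for $t=0$, the relation $|T|T=U^{*}T^{2}=0$ forces $\mathrm{ran}(T)\subseteq \ker|T|=\ker U$, whence $(U^{*}U)T=0$ and $\widetilde{T}_{0}=(U^{*}U)U|T|=(U^{*}U)T=0$.

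For the converse $\widetilde{T}_{t}=0\Rightarrow T^{2}=0$: when $t\in(0,1)$, multiplying $|T|^{t}U|T|^{1-t}=0$ by $|T|^{1-t}$ on the left and $|T|^{t}$ on the right yields $|T|U|T|=0$, hence $T^{2}=0$; when $t=1$, $|T|U=0$ trivially gives $|T|U|T|=0$; when $t=0$, $(U^{*}U)T=0$ forces $\mathrm{ran}(T)\subseteq \ker(U^{*}U)=\ker U=\ker|T|$, so $|T|T=0$ and $T^{2}=U|T|T=0$.

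The main subtlety will be the careful treatment of the endpoints $t\in\{0,1\}$: because $|T|^{0}$ is defined to be the partial projection $U^{*}U$ rather than the identity, the symmetric Heinz estimate $|T|^{r}U|T|^{r}=0$ does not reach those values by purely algebraic manipulation, and one must instead exploit the kernel identity $\ker U=\ker|T|$ together with a density argument on $\overline{\mathrm{ran}\,|T|}$.
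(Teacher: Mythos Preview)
Your argument is correct and, for the core case $t\in(0,1)$, follows essentially the same route as the paper: both proofs reduce the forward implication to the Heinz inequality applied to $|T|^{r}U|T|^{r}$ with $r=\min\{t,1-t\}$, and both obtain the reverse implication by a direct factorisation (the paper writes $T^{2}=U|T|^{1-t}\widetilde{T}_{t}|T|^{t}$ in one stroke, while you pass through the equivalent intermediate identity $|T|U|T|=0$). The paper packages the Heinz step as the quantitative estimate $\|\widetilde{T}_{t}\|\le \|T^{2}\|^{\min\{t,1-t\}}\|T\|^{|1-2t|}$, whereas you use it only qualitatively; either form suffices here. Your treatment of the endpoints $t\in\{0,1\}$ via the kernel identity $\ker U=\ker|T|$ and the density argument is in fact more careful than the paper's: the paper's displayed bound degenerates to $\|\widetilde{T}_{t}\|\le\|T\|$ at $t=0$ and $t=1$, which is vacuous for the conclusion, so your separate handling of those cases genuinely fills a gap the paper glosses over.
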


\begin{proof}
	We first prove the easier part $ (ii) \Rightarrow (i).$ It follows from the fact that  $T^2=U|T|U|T|=U|T|^{1-t}|T|^tU|T|^{1-t}|T|^t=U|T|^{1-t} \widetilde{T}_t |T|^t$ for any $ t \in [0,1].$\\
	We next prove $ (i) \Rightarrow (ii).$ We claim that 
	 \[ \|\widetilde{T}_t\| \leq \begin{cases}
	\|T^2\|^t\|T\|^{1-2t},& 0\leq t\leq \frac{1}{2}\\
	\|T^2\|^{1-t}\|T\|^{2t-1},& \frac{1}{2}\leq t\leq 1.
	\end{cases} \] 
 Consider $0\leq t\leq \frac{1}{2}$. Then $ \|\widetilde{T}_t\|=\||T|^tU|T|^{1-t}\| $. Using Lemma \ref{Heinz}, we get 
		     $\|\widetilde{T}_t\| \leq \||T|^tU|T|^{t}\| \||T|^{1-2t}\|\leq \||T|U|T|\|^{t}\|U\|^{1-t} \|T\|^{1-2t}=\|T^2\|^t \|T\|^{1-2t}.$\\
		     Next consider  $\frac{1}{2}\leq t\leq 1$. Then  using Lemma \ref{Heinz}, we get  $ \|\widetilde{T}_t\|= \||T|^tU|T|^{1-t}\| \leq \||T|^{2t-1}\| \||T|^{1-t}U|T|^{1-t}\| \leq \|T\|^{2t-1} \||T|U|T|\|^{1-t}\|U\|^{t}= \|T\|^{2t-1} \|T^2\|^{1-t}. $ The proof now easily follows from the claim established.\\

\end{proof}
Next we present the following numerical radius inequality in terms of the Aluthge transform, which improves on one of the upper bound obtained by Yamazaki in \cite[Th. 2.1]{Y}. 

\begin{theorem} \label{theorem:upper1}
	(i) Let $T\in B(\mathbb{H}).$ Then
		\[ w(T)\leq  \min_{t\in [0,1]}\left\{\frac{1}{2}w(\widetilde{T_t})+\frac{1}{4}\left(\|T\|^{2t}+\|T\|^{2-2t}\right)\right\}.\]  In particular, \[ w(T)\leq\frac{1}{2}w(\widetilde{T})+\frac{1}{2}\|T\|.\]

(ii) If dim $\mathbb{H} < \infty$  then the equalities hold in the above inequalities if and only if $T$ is either unitarily similar to $[a] \oplus B,$  $\|B\|\leq |a|$ or to $\left(\begin{array}{cc}
a&0\\
b&0
\end{array}\right) \oplus C,$  $\|C\|\leq (|a|^2+|b|^2)^{\frac{1}{2}}$ and $w(\widetilde{C_t})\leq |a|.$ \\
(iii) When $\mathbb{H}$ is an arbitrary Hilbert space, then the equalities hold  if $T^2=0$ or $T$ is normaloid, i.e., $w(T) = \|T\|.$
\end{theorem}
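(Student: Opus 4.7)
The plan is to invoke Lemma~\ref{lemma:htheta} to rewrite $w(T)=\sup_{\theta}\|H_\theta\|$ and to bound $\|H_\theta\|$ uniformly in $\theta$. Using the polar decomposition $T=U|T|$ and the splitting $|T|=|T|^{1-t}|T|^{t}$, I set $A:=e^{i\theta}U|T|^{1-t}$ and $B:=|T|^{t}$, so that $AB=e^{i\theta}T$ and $2H_\theta=AB+(AB)^{*}$. The crucial cyclic identity
\[ BA \;=\; |T|^{t}\,e^{i\theta}U|T|^{1-t}\;=\;e^{i\theta}\widetilde{T_t} \]
shows that the same pair $A,B$ produces both $T$ and $\widetilde{T_t}$; this is the mechanism by which $w(\widetilde{T_t})$ will enter the bound on $\|H_\theta\|$.

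Next, the two-sided matrix positivity $(A\mp B^{*})(A^{*}\mp B)\geq 0$ gives
\[ \pm 2H_\theta \;\leq\; AA^{*}+B^{*}B \;=\; |T|^{2t}+U|T|^{2(1-t)}U^{*}, \]
so $2\|H_\theta\|\leq\bigl\||T|^{2t}+U|T|^{2(1-t)}U^{*}\bigr\|$. The dual positivity $(A^{*}\mp B)(A\mp B^{*})\geq 0$, using $A^{*}B+BA=e^{-i\theta}\widetilde{T_t}^{*}+e^{i\theta}\widetilde{T_t}$, yields analogously
\[ \pm\,2\,\mathrm{Re}(e^{i\theta}\widetilde{T_t}) \;\leq\; |T|^{2t}+|T|^{2(1-t)}. \]
The main obstacle, and the heart of the proof, is to combine these two matched operator inequalities so as to peel off a $\tfrac{1}{2}w(\widetilde{T_t})$ contribution from the bound on $\|H_\theta\|$, leaving a remainder controlled by $\tfrac{1}{4}(\|T\|^{2t}+\|T\|^{2-2t})$ via the trivial norm estimates $\||T|^{2t}\|\leq\|T\|^{2t}$ and $\|U|T|^{2(1-t)}U^{*}\|\leq\|T\|^{2-2t}$. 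Taking the supremum over $\theta$ and minimising over $t\in[0,1]$ then establishes~(i); the ``in particular'' inequality follows by setting $t=\tfrac{1}{2}$, where $\|T\|^{2t}=\|T\|^{2-2t}=\|T\|$.

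For~(ii), in finite dimensions the equality case in the above chain forces any unit vector $x$ attaining $w(T)$ to lie in the top eigenspace of the positive operator $|T|^{2t}+U|T|^{2(1-t)}U^{*}$ and to be annihilated by the positivity deficits $A\mp B^{*}$. Working in the decomposition $\mathbb{H}=\mathrm{span}\{x\}\oplus\{x\}^{\perp}$ and exploiting that both summands are invariant under $T$ and $T^{*}$, a direct block-matrix analysis yields the two stated normal forms: the diagonal case $[a]\oplus B$ with $\|B\|\leq|a|$, and the rank-one nilpotent-type case $\bigl(\begin{smallmatrix} a & 0\\ b & 0\end{smallmatrix}\bigr)\oplus C$ with $\|C\|\leq(|a|^{2}+|b|^{2})^{1/2}$ and $w(\widetilde{C_t})\leq|a|$. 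The converse (each normal form realises equality) is a routine verification.

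For~(iii), suppose first $T^{2}=0$. Proposition~\ref{prop-1} gives $\widetilde{T_t}=0$ for every $t\in[0,1]$, so the right-hand side of~(i) collapses to $\tfrac{1}{4}(\|T\|^{2t}+\|T\|^{2-2t})\geq\tfrac{1}{2}\|T\|$ with equality at $t=\tfrac{1}{2}$ by AM--GM. Combined with the universal $w(T)\geq\tfrac{1}{2}\|T\|$, this forces $w(T)=\tfrac{1}{2}\|T\|$ and equality in the minimum. If instead $T$ is normaloid ($w(T)=\|T\|$), specialising~(i) at $t=\tfrac{1}{2}$ yields $\|T\|\leq\tfrac{1}{2}w(\widetilde T)+\tfrac{1}{2}\|T\|$, whence $w(\widetilde T)\geq\|T\|$; together with $w(\widetilde T)\leq\|\widetilde T\|\leq\|T\|$, this forces $w(\widetilde T)=\|T\|$, so that the right-hand side at $t=\tfrac{1}{2}$ equals $\|T\|=w(T)$.
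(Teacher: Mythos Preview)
Your argument for part~(i) has a genuine gap. You set up the factorisation $AB=e^{i\theta}T$, $BA=e^{i\theta}\widetilde{T_t}$ and derive two separate positivity inequalities: $\pm 2H_\theta\leq AA^*+B^*B$ and $\pm 2\,\textit{Re}(e^{i\theta}\widetilde{T_t})\leq A^*A+BB^*$. But then you write that ``the main obstacle, and the heart of the proof, is to combine these two matched operator inequalities so as to peel off a $\tfrac{1}{2}w(\widetilde{T_t})$ contribution''---and you stop there. That combination is never performed, and in fact it is not clear it \emph{can} be performed from those two inequalities alone: both are upper bounds (one for $H_\theta$, one for $\textit{Re}(e^{i\theta}\widetilde{T_t})$) by essentially the same positive operator, so there is no mechanism to transfer part of the bound on $\|H_\theta\|$ over to $w(\widetilde{T_t})$. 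The first inequality by itself only yields $w(T)\leq\tfrac{1}{4}(\|T\|^{2t}+\|T\|^{2-2t})$ with no $w(\widetilde{T_t})$ term, and the second says nothing about $H_\theta$.

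The paper's route is far shorter and avoids this obstacle entirely: it simply invokes the already-established inequality~(\ref{number4}), namely $w(T)\leq\tfrac{1}{2}\|T\|+\tfrac{1}{2}w(\widetilde{T_t})$, and then weakens the $\|T\|$ term via the AM--GM inequality $\|T\|\leq\tfrac{1}{2}(\|T\|^{2t}+\|T\|^{2-2t})$. No new operator-theoretic work is required. If you want a self-contained argument, you would need to reproduce the proof of~(\ref{number4}) (which uses the block-matrix spectral-radius trick $r(XY)=r(YX)$ rather than your direct positivity bounds), and then apply AM--GM.

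For part~(ii) your sketch is also problematic: the assertion that $\mathrm{span}\{x\}$ and $\{x\}^\perp$ are both invariant under $T$ and $T^*$ is false for the second normal form $\bigl(\begin{smallmatrix}a&0\\ b&0\end{smallmatrix}\bigr)\oplus C$ with $b\neq 0$, so the block-matrix analysis cannot proceed as stated. The paper handles this part by citing \cite[Th.~4.2]{GW} directly. Your treatment of part~(iii) is correct and in fact more explicit than the paper's, which just declares it obvious.
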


\begin{proof}
(i) 	It follows from arithmetic-geometric mean inequality that $2\|T\|\leq \|T\|^{2t}+\|T\|^{2-2t}$ for all $t\in [0,1]$.
	Using this and inequality (\ref{number4}), we get 
	\[w(T)\leq\min_{t\in [0,1]}\left\{\frac{1}{2}w(\widetilde{T_t})+\frac{1}{4}\left(\|T\|^{2t}+\|T\|^{2-2t}\right)\right\}.\]
	Considering $t = \frac{1}{2},$ we get 
 \[	w(T) \leq \frac{1}{2}w(\widetilde{T})+\frac{1}{2}\|T\|.\]
 (ii) Let us assume that $T$ is an $n\times n$ matrix. Then following  \cite[Th. 4.2]{GW} we can conclude that the equalities hold if and only if  $T$ is either unitarily similar to $[a] \oplus B,$  $\|B\|\leq |a|$ or to $\left(\begin{array}{cc}
 a&0\\
 b&0
 \end{array}\right) \oplus C,$  $\|C\|\leq (|a|^2+|b|^2)^{\frac{1}{2}}$ and $w(\widetilde{C_t})\leq |a|.$ \\
 (iii) The proof is obvious.\\
\end{proof}

Next we prove the following inequality for the numerical radius which improves on the upper bound  obtained by Kittaneh in \cite[Th. 1]{FK1}.

\begin{theorem}\label{theorem:upper2}
	Let $T\in B(\mathbb{H}).$ Then
	\[w^2(T) \leq\frac{1}{2}\|T\| \left(\min_{t\in [0,1]}\|\widetilde{T_t}\|\right)+\frac{1}{4}\|T^{*}T+TT^{*}\|.\]
	In particular, \[w^2(T)	\leq\frac{1}{2}\|T\|\|\widetilde{T}\|+\frac{1}{4}\|T^{*}T+TT^{*}\|.\]
\end{theorem}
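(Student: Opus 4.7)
The strategy is to start from Lemma \ref{lemma:htheta} and exploit the self-adjointness of $H_\theta$. Since $H_\theta=\mathrm{Re}(e^{\mathrm{i}\theta}T)$ is self-adjoint, one has $\|H_\theta\|^{2}=\|H_\theta^{2}\|$, and therefore
\[
w^{2}(T)=\sup_{\theta\in\mathbb{R}}\|H_\theta\|^{2}=\sup_{\theta\in\mathbb{R}}\|H_\theta^{2}\|.
\]
The first step is to square out $H_\theta=\tfrac{1}{2}(e^{\mathrm{i}\theta}T+e^{-\mathrm{i}\theta}T^{*})$, which gives
\[
H_\theta^{2}=\tfrac{1}{4}\bigl(e^{2\mathrm{i}\theta}T^{2}+e^{-2\mathrm{i}\theta}T^{*2}+T^{*}T+TT^{*}\bigr),
\]
so that the triangle inequality yields, uniformly in $\theta$,
\[
\|H_\theta^{2}\|\;\leq\;\tfrac{1}{2}\|T^{2}\|+\tfrac{1}{4}\|T^{*}T+TT^{*}\|.
\]

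The second step is to control $\|T^{2}\|$ by $\|T\|\,\|\widetilde{T_t}\|$ for every $t\in[0,1]$. Writing $T=U|T|$ and splitting $|T|=|T|^{1-t}|T|^{t}$ twice, I get the factorisation
\[
T^{2}=U|T|U|T|=U|T|^{1-t}\bigl(|T|^{t}U|T|^{1-t}\bigr)|T|^{t}=U|T|^{1-t}\widetilde{T_t}\,|T|^{t},
\]
exactly as already used in the proof of Proposition~\ref{prop-1}. Sub-multiplicativity of the operator norm together with $\|U\|\leq 1$ and $\||T|^{s}\|=\|T\|^{s}$ for $s\in[0,1]$ then gives
\[
\|T^{2}\|\;\leq\;\|T\|^{1-t}\|\widetilde{T_t}\|\,\|T\|^{t}\;=\;\|T\|\,\|\widetilde{T_t}\|.
\]

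Finally, since this holds for every $t\in[0,1]$, I minimise the right-hand side over $t$ to obtain $\|T^{2}\|\leq\|T\|\min_{t\in[0,1]}\|\widetilde{T_t}\|$. Substituting back into the bound for $\|H_\theta^{2}\|$ and taking $\sup_\theta$ yields
\[
w^{2}(T)\;\leq\;\tfrac{1}{2}\|T\|\Bigl(\min_{t\in[0,1]}\|\widetilde{T_t}\|\Bigr)+\tfrac{1}{4}\|T^{*}T+TT^{*}\|,
\]
and the ``in particular'' statement is the specialisation $t=\tfrac{1}{2}$. There is no real obstacle here; the only step that requires a small observation is the factorisation $T^{2}=U|T|^{1-t}\widetilde{T_t}|T|^{t}$, but this is already isolated in the paper. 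No appeal to the Heinz inequality (Lemma \ref{Heinz}) is needed for this theorem, since ordinary sub-multiplicativity suffices once the factorisation is in hand.
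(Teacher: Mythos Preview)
Your proof is correct and follows essentially the same route as the paper: both expand $4H_\theta^{2}=e^{2\mathrm{i}\theta}T^{2}+e^{-2\mathrm{i}\theta}T^{*2}+T^{*}T+TT^{*}$, use the factorisation $T^{2}=U|T|^{1-t}\widetilde{T_t}|T|^{t}$, and estimate the resulting norms by sub-multiplicativity before taking $\sup_\theta$ and $\min_t$. The only cosmetic difference is that the paper substitutes the factorisation directly into the expression for $H_\theta^{2}$ before taking norms, whereas you first isolate the scalar bound $\|T^{2}\|\leq\|T\|\,\|\widetilde{T_t}\|$ and then plug it in.
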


\begin{proof}
	Since $H_{\theta} = \frac{1}{2}(e^{{\rm i} \theta} T + e^{-{\rm i} \theta} T^*)$ for all $\theta \in \mathbb{R}$, we have
	\begin{eqnarray*}
		4 {H_{\theta}}^2 &= & e^{2{\rm i} \theta} T^2 + e^{-2{\rm i} \theta} {T^*}^2 + T^{*}T+TT^{*} \\
		&= & e^{2{\rm i} \theta} U|T|U|T| + e^{-2{\rm i} \theta} |T|U^{*}|T|U^{*} + T^{*}T+TT^{*} \\
		&= & e^{2{\rm i} \theta} U|T|^{1-t}|T|^{t}U|T|^{1-t}|T|^{t} + e^{-2{\rm i} \theta} |T|^{t}|T|^{1-t}U^{*}|T|^{t}|T|^{1-t}U^{*} \\
		&& +(T^{*}T+TT^{*}) \\
		&= & e^{2{\rm i} \theta} U|T|^{1-t}\widetilde{T_t}|T|^{t} + e^{-2{\rm i} \theta} |T|^{t}\widetilde{T^{*}_t}|T|^{1-t}U^{*} + T^{*}T+TT^{*}.
	\end{eqnarray*}
	Hence, 
	\begin{eqnarray*}
		4 \|{H_{\theta}}\|^2  &\leq & \|e^{2{\rm i} \theta} U|T|^{1-t}\widetilde{T_t}|T|^{t}\| + \|e^{-2{\rm i} \theta} |T|^{t}\widetilde{T^{*}_t}|T|^{1-t}U^{*}\| + \|T^{*}T+TT^{*}\| \\
		&\leq & 2\|T\|	\|\widetilde{T_t}\|+ \|T^{*}T+TT^{*}\|. 
	\end{eqnarray*}
	Therefore,
	\begin{eqnarray*}
		\|{H_{\theta}}\|^2  &\leq & \frac{1}{2}\|T\|	\|\widetilde{T_t}\|+\frac{1}{4} \|T^{*}T+TT^{*}\|.
	\end{eqnarray*}
	Taking supremum over $\theta \in \mathbb{R}$ in the above inequality and then using Lemma \ref{lemma:htheta}, we get
	\[w^2(T)  \leq  \frac{1}{2}\|T\|	\|\widetilde{T_t}\|+\frac{1}{4} \|T^{*}T+TT^{*}\|.\]
	This inequality holds for all $t\in [0,1]$, and so taking minimum we get,
	\[w^2(T)\leq \frac{1}{2}\|T\|\big(\min_{t\in [0,1]}\|\widetilde{T_t}\|\big)+\frac{1}{4}\|T^{*}T+TT^{*}\|.\]
	Considering the case  $t=\frac{1}{2},$ we get 
	\[w^2(T)	\leq\frac{1}{2}\|T\|\|\widetilde{T}\|+\frac{1}{4}\|T^{*}T+TT^{*}\|.\]
\end{proof}

\begin{remark}
	If $T^2=0$  or $T$ is a normaloid operator then   inequalities  in Theorem \ref{theorem:upper2} become equalities. If $T^2=0$ then $w(T)=\frac{1}{2}\sqrt{\|T^*T+TT^*\|}$, (see \cite[Th. 2.3]{BBP}) and $ \frac{1}{2}\|T\|\big(\min_{t\in [0,1]}\|\widetilde{T_t}\|\big)+\frac{1}{4}\|T^{*}T+TT^{*}\| =  \frac{1}{4}\|T^{*}T+TT^{*}\|.$ Thus we get the equalities if $T^2=0.$ 
	Also note that  $ w^2(T) \leq \frac{1}{2}\|T\|\big(\min_{t\in [0,1]}\|\widetilde{T_t}\|\big)+\frac{1}{4}\|T^{*}T+TT^{*}\| \leq \|T\|^2$ and so normaloid condition forces the inequlities to be equalities.
\end{remark}

\begin{remark}\label{remark1}
	Kittaneh in \cite[Th. 1]{FK1} proved that for $T\in B(\mathbb{H})$,
	$$ w(T)\leq \frac{1}{2}\left(\|T\|+\|T^2\|^{\frac{1}{2}}\right).$$
	We know that  $\|\widetilde{T}\|\leq \|T^2\|^{\frac{1}{2}} $ (see \cite[(2.1)]{Y}) and  $\|T^{*}T+TT^{*}\|\leq \|T\|^2+\|T^2\|$ (see \cite[Lemma 7]{FK3}) and so from our Theorem \ref{theorem:upper2}, we get
	$$w^2(T)\leq \frac{1}{2}\|T\|\|T^2\|^{\frac{1}{2}}+\frac{1}{4}\left(\|T\|^2+\|T^2\|\right).$$ 
	Hence,
	$$w(T)\leq \frac{1}{2}\left(\|T\|+\|T^2\|^{\frac{1}{2}}\right).$$
	Thus  our bound obtained in Theorem \ref{theorem:upper2} is better than bound (\ref{number1}) obtained by Kittaneh in \cite[Th. 1]{FK1}. Also  there are operators for which bound obtained by us in  Theorem \ref{theorem:upper2} is better than that obtained in inequality (\ref{number4}) obtained by Abu-Omar and Kittaneh  \cite[Th. 3.2]{AF2}. As for example we consider  $T=\left(\begin{array}{ccc}
	0&2&0 \\
	0&0&0\\
	0&0&1
	\end{array}\right).$ It is easy to see that $\|T\|=2$ and $T$ has the polar decomposition $T=U|T|$, where $|T|=\left(\begin{array}{ccc}
	0&0&0 \\
	0&2&0\\
	0&0&1
	\end{array}\right)$  and $U=\left(\begin{array}{ccc}
	0&1&0 \\
	1&0&0\\
	0&0&1
	\end{array}\right).$ Hence $ \widetilde{T_t}=|T|^tU|T|^{1-t}=\left(\begin{array}{ccc}
	0&0&0 \\
	0&0&0\\
	0&0&1
	\end{array}\right)$ for all $t\in [0,1]$, and so $w(\widetilde{T_t})=\|\widetilde{T_t}\|=1$ for all $t\in [0,1]$. It follows that  $$\frac{1}{2}\big(\|T\|+\min_{t\in [0,1]}w(\widetilde{T_t})\big)=\frac{1}{2}(2+1)=\frac{3}{2},$$
	$$\frac{1}{2}\|T\|\big(\min_{t\in [0,1]}\|\widetilde{T_t}\|\big)+\frac{1}{4}\|T^{*}T+TT^{*}\|=\frac{1}{2} \times 2\times 1+\frac{1}{4}\times 4=2.$$
	Therefore, Theorem \ref{theorem:upper2} gives $w(T)\leq \sqrt{2}$, whereas (\ref{number4}) gives $w(T)\leq \frac{3}{2}.$\\
\end{remark}

We next obtain an upper bound for the numerical radius which improves on the bound (\ref{number1}) obtained by Kittaneh in \cite[Th. 1]{FK1}.
To achieve it, we need the following inequality obtained by Abu-Omar and Kittaneh \cite{AF2}.
\begin{theorem}\cite[Th. 2.2]{AF2} \label{theorem:spectral2}
	Let $A_1, A_2, B_1, B_2 \in B(\mathbb{H}).$ Then 
	\begin{eqnarray*}
		r(A_1B_1+A_2B_2) &\leq& \frac{1}{2}\left(w(B_1A_1)+w(B_2A_2)\right)\\
		&& +\frac{1}{2}\sqrt{\left(w(B_1A_1)-w(B_2A_2)\right)^2+4\|B_1A_2\| \|B_2A_1\|}.
	\end{eqnarray*}
\end{theorem}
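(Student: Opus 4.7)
The plan is to reduce the spectral radius on the left to the spectral radius of a single $2\times 2$ block operator matrix, and then bound that quantity by the numerical radius of a suitable similar operator whose two off-diagonal blocks have equal norm.

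First, view $C=(A_1,\,A_2)\colon \mathbb{H}\oplus\mathbb{H}\to\mathbb{H}$ as a row operator and let $D\colon\mathbb{H}\to\mathbb{H}\oplus\mathbb{H}$ be the column operator with entries $B_1,B_2$. Then $CD=A_1B_1+A_2B_2$, while
$$DC=\begin{pmatrix} B_1A_1 & B_1A_2 \\ B_2A_1 & B_2A_2 \end{pmatrix} =: T.$$
The standard identity $\sigma(CD)\cup\{0\}=\sigma(DC)\cup\{0\}$ gives $r(A_1B_1+A_2B_2)=r(T)$, so it suffices to bound $r(T)$.

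Next, equalise the norms of the two off-diagonal blocks by a diagonal similarity. Assuming $\|B_1A_2\|\,\|B_2A_1\|>0$ (the degenerate cases follow by continuity in the entries), set $\alpha=(\|B_1A_2\|/\|B_2A_1\|)^{1/2}$ and $S=\operatorname{diag}(\alpha I,\,I)$. A direct computation gives
$$S^{-1}TS=\begin{pmatrix} B_1A_1 & \alpha^{-1}B_1A_2 \\ \alpha B_2A_1 & B_2A_2 \end{pmatrix},$$
and both off-diagonal blocks now have operator norm $\gamma:=\sqrt{\|B_1A_2\|\,\|B_2A_1\|}$. Since spectral radius is similarity-invariant, $r(T)=r(S^{-1}TS)\le w(S^{-1}TS)$. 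Finally, for any unit vector $(x_1,x_2)\in\mathbb{H}\oplus\mathbb{H}$, the triangle inequality together with the definitions of $w(\cdot)$ and $\|\cdot\|$ yields
$$\bigl|\langle S^{-1}TS(x_1,x_2),(x_1,x_2)\rangle\bigr|\le w(B_1A_1)\|x_1\|^2 + 2\gamma\|x_1\|\|x_2\| + w(B_2A_2)\|x_2\|^2.$$
Maximising the right-hand side over the sphere $\|x_1\|^2+\|x_2\|^2=1$ amounts to computing the largest eigenvalue of the scalar matrix with diagonal entries $w(B_1A_1),w(B_2A_2)$ and off-diagonal entries $\gamma$, which is precisely the stated upper bound.

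The main obstacle is producing the factor $4\|B_1A_2\|\,\|B_2A_1\|$ under the square root rather than the weaker $(\|B_1A_2\|+\|B_2A_1\|)^2$ that a direct triangle-inequality estimate on $w(T)$ would deliver; by AM--GM the two coincide only when the norms agree. The diagonal similarity is the essential device that closes this gap: it is free of cost for $r(T)$ but rescales $w(T)$ in exactly the right way, trading the arithmetic mean of the off-diagonal norms for their geometric mean.
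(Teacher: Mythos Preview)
The paper does not prove this theorem; it merely quotes it from Abu-Omar and Kittaneh \cite{AF2} as an auxiliary tool for Theorem \ref{theorem:upper3}. Your argument is correct and is, in fact, essentially the proof given in \cite{AF2}: pass from $r(CD)$ to $r(DC)$ to land on the $2\times 2$ block operator matrix, balance the off-diagonal blocks by a diagonal similarity (which preserves $r$ but optimises $w$), and read off the bound as the top eigenvalue of the associated scalar $2\times 2$ symmetric matrix.

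One minor remark on the degenerate case: when you appeal to ``continuity in the entries'', bear in mind that the spectral radius is in general only upper semicontinuous in the operator norm topology; this is, however, enough here since you are proving an upper bound and both $w(\cdot)$ and $\|\cdot\|$ on the right-hand side are genuinely continuous. Alternatively, the degenerate case can be handled directly: if, say, $B_1A_2=0$, then $T$ is block lower triangular, so $\sigma(T)=\sigma(B_1A_1)\cup\sigma(B_2A_2)$ and hence
\[
r(T)=\max\{r(B_1A_1),\,r(B_2A_2)\}\le \max\{w(B_1A_1),\,w(B_2A_2)\},
\]
which is exactly the stated right-hand side with the product under the square root equal to zero.
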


We now prove the following theorem.
\begin{theorem}\label{theorem:upper3}
	Let $T\in B(\mathbb{H}).$ Then
	\[ w^2(T)\leq \min_{t\in [0,1]}\left(\frac{1}{4}w(\widetilde{T_t}^2)+\frac{1}{4}\|T\|\|\widetilde{T_t}\|\right)+\frac{1}{4}\|T^{*}T+TT^{*}\|\]
		In particular, \[ w^2(T)\leq \frac{1}{4}w(\widetilde{T}^2)+\frac{1}{4}\|T\|\|\widetilde{T}\|+\frac{1}{4}\|T^{*}T+TT^{*}\|.\]
\end{theorem}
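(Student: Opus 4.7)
The plan is to mimic the strategy of Theorem \ref{theorem:upper2}, where one expands $4H_\theta^2$, but now to bound the ``cross term'' $e^{2\mathrm{i}\theta}T^2+e^{-2\mathrm{i}\theta}T^{*2}$ more carefully using Theorem \ref{theorem:spectral2} rather than the crude triangle inequality. Starting from the identity
\[
4H_\theta^2=e^{2\mathrm{i}\theta}T^2+e^{-2\mathrm{i}\theta}T^{*2}+T^{*}T+TT^{*},
\]
the key point is that the operator $e^{2\mathrm{i}\theta}T^2+e^{-2\mathrm{i}\theta}T^{*2}$ is self-adjoint, so its norm equals its spectral radius. This is precisely the setting in which Theorem \ref{theorem:spectral2} can be applied effectively.

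Next I would use the polar decomposition $T=U|T|$ together with the identity $|T|U|T|=|T|^{1-t}\widetilde{T_t}|T|^{t}$ (already exploited in the proof of Theorem \ref{theorem:upper2}) to write
\[
T^2=U|T|^{1-t}\widetilde{T_t}|T|^{t},\qquad T^{*2}=|T|^{t}\widetilde{T_t}^{*}|T|^{1-t}U^{*}.
\]
With this, I set
\[
A_1=e^{2\mathrm{i}\theta}U|T|^{1-t}\widetilde{T_t},\quad B_1=|T|^{t},\quad A_2=e^{-2\mathrm{i}\theta}|T|^{t}\widetilde{T_t}^{*},\quad B_2=|T|^{1-t}U^{*},
\]
so that $e^{2\mathrm{i}\theta}T^2+e^{-2\mathrm{i}\theta}T^{*2}=A_1B_1+A_2B_2$.

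Now I would compute the ``swapped'' products needed by Theorem \ref{theorem:spectral2}. A direct calculation gives $B_1A_1=e^{2\mathrm{i}\theta}\widetilde{T_t}^{2}$ and $B_2A_2=e^{-2\mathrm{i}\theta}(\widetilde{T_t}^{2})^{*}$, so that both numerical radii reduce to $w(\widetilde{T_t}^{2})$, making the square-root term in Theorem \ref{theorem:spectral2} collapse. For the cross norms I would use the submultiplicativity estimates
\[
\|B_1A_2\|\le \|T\|^{2t}\|\widetilde{T_t}\|,\qquad \|B_2A_1\|\le \|T\|^{2-2t}\|\widetilde{T_t}\|,
\]
which multiply to $\|T\|^{2}\|\widetilde{T_t}\|^{2}$. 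Theorem \ref{theorem:spectral2} therefore yields
\[
\bigl\|e^{2\mathrm{i}\theta}T^2+e^{-2\mathrm{i}\theta}T^{*2}\bigr\|=r(A_1B_1+A_2B_2)\le w(\widetilde{T_t}^{2})+\|T\|\,\|\widetilde{T_t}\|.
\]

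Finally I would insert this estimate into the expansion of $4H_\theta^2$, use $\|H_\theta^2\|=\|H_\theta\|^{2}$ (self-adjointness) and the triangle inequality with $\|T^{*}T+TT^{*}\|$, to get
\[
\|H_\theta\|^{2}\le \frac{1}{4}w(\widetilde{T_t}^{2})+\frac{1}{4}\|T\|\,\|\widetilde{T_t}\|+\frac{1}{4}\|T^{*}T+TT^{*}\|.
\]
Taking the supremum over $\theta\in\mathbb{R}$ and invoking Lemma \ref{lemma:htheta} produces the inequality for each fixed $t$; taking the minimum over $t\in[0,1]$ and setting $t=\tfrac{1}{2}$ delivers the two claims. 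The only step that requires genuine care is the algebraic verification that $B_1A_1$ and $B_2A_2$ actually reduce to $\pm$-phases of $\widetilde{T_t}^{2}$ (this uses $|T|^{t}U|T|^{1-t}=\widetilde{T_t}$ and its adjoint); once that collapse is seen the rest is bookkeeping with Theorem \ref{theorem:spectral2}.
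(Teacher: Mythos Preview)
Your proposal is correct and follows essentially the same route as the paper: both start from $4H_\theta^2$, replace the norm of the self-adjoint cross term $e^{2\mathrm{i}\theta}T^2+e^{-2\mathrm{i}\theta}T^{*2}$ by its spectral radius, and apply Theorem~\ref{theorem:spectral2} to a factorization coming from $T^2=U|T|^{1-t}\widetilde{T_t}|T|^{t}$. The only cosmetic difference is where the extra factor $\widetilde{T_t}^{*}$ is placed in the second summand---the paper uses $A_2=e^{-2\mathrm{i}\theta}|T|^{t}$, $B_2=\widetilde{T_t}^{*}|T|^{1-t}U^{*}$, whereas you use $A_2=e^{-2\mathrm{i}\theta}|T|^{t}\widetilde{T_t}^{*}$, $B_2=|T|^{1-t}U^{*}$---but the swapped products $B_iA_j$ lead to the same estimates and the same final bound.
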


\begin{proof}
	Since $H_{\theta} =  \frac{1}{2}(e^{{\rm i} \theta} T + e^{-{\rm i} \theta} T^*)$ for all $\theta \in \mathbb{R}$, we have
	\begin{eqnarray*}
		4 {H_{\theta}}^2  &= & e^{2{\rm i} \theta} T^2 + e^{-2{\rm i} \theta} {T^*}^2 + T^{*}T+TT^{*} \\
		&= & e^{2{\rm i} \theta} U|T|U|T| + e^{-2{\rm i} \theta} |T|U^{*}|T|U^{*} + T^{*}T+TT^{*} \\
		&= & e^{2{\rm i} \theta} U|T|^{1-t}|T|^{t}U|T|^{1-t}|T|^{t} + e^{-2{\rm i} \theta} |T|^{t}|T|^{1-t}U^{*}|T|^{t}|T|^{1-t}U^{*}\\
		&& +T^{*}T+TT^{*}. 
	\end{eqnarray*}
	Hence, 
	\begin{eqnarray*}
		4 \|{H_{\theta}}\|^2  &\leq & \|e^{2{\rm i} \theta} U|T|^{1-t}|T|^{t}U|T|^{1-t}|T|^{t} + e^{-2{\rm i} \theta} |T|^{t}|T|^{1-t}U^{*}|T|^{t}|T|^{1-t}U^{*}\| \\
		&& +\|T^{*}T+TT^{*}\| \\
		&= & r\big(e^{2{\rm i} \theta} U|T|^{1-t}|T|^{t}U|T|^{1-t}|T|^{t} + e^{-2{\rm i} \theta} |T|^{t}|T|^{1-t}U^{*}|T|^{t}|T|^{1-t}U^{*}\big) \\
		&&+ \|T^{*}T+TT^{*}\|,~~ r(S)=\|S\| ~~\textit{for hermitian operator} ~~S  \\
		& = & r\big(A_1B_1+A_2B_2\big)+\|T^{*}T+TT^{*}\| ,
	\end{eqnarray*}			
	where   $ A_1 = e^{2{\rm i} \theta} U|T|^{1-t}|T|^{t}U|T|^{1-t}, B_1=|T|^{t}, 
	A_2 = e^{-2{\rm i} \theta} |T|^{t} $ and\\ $ B_2=|T|^{1-t}U^*|T|^{t}|T|^{1-t}U^*. $
	Then using Theorem \ref{theorem:spectral2}, we get	
	\begin{eqnarray*}
		4 \|{H_{\theta}}\|^2 &\leq & w(\widetilde{T_t}^2)+\sqrt{\||T|^{2t}\| \|\widetilde{T_t}^{*}|T|^{1-t}U^{*}U|T|^{1-t}\widetilde{T_t} \|}+\|T^{*}T+TT^{*}\| \\
		&= & w(\widetilde{T_t}^2)+\sqrt{\|T\|^{2t} \|\widetilde{T_t}^{*}|T|^{2-2t} \widetilde{T_t} \|}+\|T^{*}T+TT^{*}\| \\
		&\leq & w(\widetilde{T_t}^2)+\sqrt{\|T\|^{2t} \|\widetilde{T_t}\|^2 \|T\|^{2-2t} }+\|T^{*}T+TT^{*}\| \\
		&=& w(\widetilde{T_t}^2)+ \|T\| \|\widetilde{T_t}\| +\|T^{*}T+TT^{*}\|. 
	\end{eqnarray*}			
	Taking supremum over $\theta \in \mathbb{R}$ in the above inequality and then using Lemma \ref{lemma:htheta}, we get
	\[w^2(T)\leq \frac{1}{4}w(\widetilde{T_t}^2)+\frac{1}{4}\|T\|\|\widetilde{T_t}\|+\frac{1}{4}\|T^{*}T+TT^{*}\|.\]
	This  holds for all $t\in [0,1]$, and so taking minimum we get,
	\[w^2(T)\leq \min_{t\in [0,1]}\left(\frac{1}{4}w(\widetilde{T_t}^2)+\frac{1}{4}\|T\|\|\widetilde{T_t}\|\right)+\frac{1}{4}\|T^{*}T+TT^{*}\|.\]
	Considering the case  $t=\frac{1}{2},$ we get 
	\[w^2(T)\leq \frac{1}{4}w(\widetilde{T}^2)+\frac{1}{4}\|T\|\|\widetilde{T}\|+\frac{1}{4}\|T^{*}T+TT^{*}\|.\]	
\end{proof}

\begin{remark}
	  We observe that  $ \min_{t\in [0,1]}\left(\frac{1}{4}w(\widetilde{T_t}^2)+\frac{1}{4}\|T\|\|\widetilde{T_t}\|\right) = 0 $ if $T^2=0.$ Also, as discussed in Remark 1, if   $T^2=0$  or $T$ is a normaloid operator then   inequalities  in Theorem \ref{theorem:upper3} become equalities. 
\end{remark}

\begin{remark}
	It is easy to observe that the inequality obtained by us in Theorem \ref{theorem:upper3} is sharper than the inequality obtained  in Theorem \ref{theorem:upper2} and so it is sharper than inequality (\ref{number1}) obtained by Kittaneh in \cite[Th. 1]{FK1}. Also if we take the same matrix $T=\left(\begin{array}{ccc}
	0&2&0 \\
	0&0&0\\
	0&0&1
	\end{array}\right)$   as in Remark \ref{remark1} then Theorem \ref{theorem:upper3} gives $w(T)\leq \sqrt{\frac{7}{4}}$, whereas (\ref{number4}) gives $w(T)\leq \frac{3}{2}.$ Thus for this matrix, our inequality obtained in Theorem \ref{theorem:upper3} is better than inequality (\ref{number4}) obtained by Abu-Omar and Kittaneh  \cite[Th. 3.2]{AF2}. Infact, if we consider $T=\left(\begin{array}{ccc}
	0&a&0 \\
	0&0&0\\
	0&0&b
	\end{array}\right)$ where $a,b \in \mathbb{C}$, then we see that the bound in Theorem \ref{theorem:upper3} is always less than or equal to the bound in \cite[Th. 3.2]{AF2}, obtained by Abu-Omar and Kittaneh.
\end{remark}

Next using  Theorem \ref{theorem:upper3} we obtain the following inequality for the numerical radius in terms of  iterated $t$-Aluthge transform. For a non-negative integer $n$, we denote the $n$th iterated $t$-Aluthge transform ${\widetilde{T}_{t_n}}$, i.e., ${\widetilde{T}_{t_n}}={\widetilde{\widetilde{T}}_{t_{n-1}}}$ and ${\widetilde{T}_{t_0}}=T$.

\begin{theorem}\label{iterate-1}
	Let $T\in B(\mathbb{H})$. Then
	\[w^2(T)\leq \sum_{n=1}^{\infty}\frac{1}{4^n}\left( \|\widetilde{T}_{t_{n-1}}\| \|\widetilde{T}_{t_{n}}\|+\|\widetilde{T}_{t_{n-1}}^*\widetilde{T}_{t_{n-1}}+\widetilde{T}_{t_{n-1}}\widetilde{T}_{t_{n-1}}^*\|    \right),\] for all $t\in [0,1].$
\end{theorem}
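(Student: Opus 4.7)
The plan is to iterate Theorem \ref{theorem:upper3} by applying it successively to the iterated $t$-Aluthge transforms, using Berger's power inequality ($w(A^2)\leq w(A)^2$) to pass from $w(\widetilde{T}_{t_n}^2)$ to $w^2(\widetilde{T}_{t_n})$ at each step, so that the same bound can be re-fed into itself. The final step is to verify that the remainder term decays like $4^{-N}$, so the infinite sum is the legitimate limit.

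First I would rewrite Theorem \ref{theorem:upper3} (for a fixed $t\in[0,1]$) using the iterated notation $\widetilde{T}_{t_0}=T$, $\widetilde{T}_{t_1}=\widetilde{T}_t$, and invoke the Berger power inequality $w(\widetilde{T}_{t_1}^2)\leq w^2(\widetilde{T}_{t_1})$ to obtain the base estimate
\[
w^2(T)\leq \frac{1}{4}w^2(\widetilde{T}_{t_1})+\frac{1}{4}\|\widetilde{T}_{t_0}\|\,\|\widetilde{T}_{t_1}\|+\frac{1}{4}\|\widetilde{T}_{t_0}^{*}\widetilde{T}_{t_0}+\widetilde{T}_{t_0}\widetilde{T}_{t_0}^{*}\|.
\]
Applying exactly the same inequality to $\widetilde{T}_{t_1}$ (whose $t$-Aluthge transform is $\widetilde{T}_{t_2}$) and substituting in, then proceeding by induction on $N$, gives
\[
w^2(T)\leq \frac{1}{4^{N}}w^2(\widetilde{T}_{t_N})+\sum_{n=1}^{N}\frac{1}{4^{n}}\Bigl(\|\widetilde{T}_{t_{n-1}}\|\,\|\widetilde{T}_{t_n}\|+\|\widetilde{T}_{t_{n-1}}^{*}\widetilde{T}_{t_{n-1}}+\widetilde{T}_{t_{n-1}}\widetilde{T}_{t_{n-1}}^{*}\|\Bigr).
\]

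To let $N\to\infty$ I need $\tfrac{1}{4^{N}}w^2(\widetilde{T}_{t_N})\to 0$. This follows from the uniform bound $w(\widetilde{T}_{t_n})\leq \|\widetilde{T}_{t_n}\|\leq \|T\|$: submultiplicativity gives $\||T|^{t}U|T|^{1-t}\|\leq \|T\|^{t}\|T\|^{1-t}=\|T\|$ at each iteration, so $\|\widetilde{T}_{t_n}\|\leq \|T\|$ for every $n$ by a trivial induction, and hence the tail term is bounded by $\|T\|^2/4^{N}\to 0$. Passing to the limit yields the claimed inequality.

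The only subtle point — and the one place where care is needed — is the use of Berger's power inequality to convert $w(\widetilde{T}_{t_n}^2)$ into $w^2(\widetilde{T}_{t_n})$ at every step; without this the recursion does not close on itself. Everything else (the telescoping substitution, the convergence of the series since $\sum 4^{-n}<\infty$, and the uniform bound on the iterates) is routine once the recursion is set up.
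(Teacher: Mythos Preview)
Your proof is correct and follows essentially the same approach as the paper: apply Theorem~\ref{theorem:upper3}, use the power inequality $w(A^2)\leq w(A)^2$ to pass from $w(\widetilde{T}_{t_n}^2)$ to $w^2(\widetilde{T}_{t_n})$, and iterate. The paper's argument is less explicit about the vanishing of the tail term (it simply writes ``$\ldots$'' and passes to the infinite sum), so your version is in fact slightly more complete.
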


\begin{proof}
	By using Theorem \ref{theorem:upper3} repeatedly, we get
	\begin{eqnarray*}
		w^2(T)&\leq& \frac{1}{4}\left(\|T\| \|\widetilde{T}_t\|+\|T^*T+TT^*\| \right)+\frac{1}{4}w(\widetilde{T}_t^2)\\
		&\leq& \frac{1}{4}\left(\|T\| \|\widetilde{T}_t\|+\|T^*T+TT^*\| \right)+\frac{1}{4}w^2(\widetilde{T}_t)\\
		&\leq& \frac{1}{4}\left(\|T\| \|\widetilde{T}_t\|+\|T^*T+TT^*\| \right)\\
		&&+\frac{1}{4^2}\left(\|\widetilde{T}_t\| \|\widetilde{T}_{t_2}\|+\|\widetilde{T}_t^*\widetilde{T}_t+\widetilde{T}_t\widetilde{T}_t^*   \|   \right)+\frac{1}{4^2}w(\widetilde{T}_2^2)\\
		&\leq& \frac{1}{4}\left(\|T\| \|\widetilde{T}_t\|+\|T^*T+TT^*\| \right)\\
		&&+\frac{1}{4^2}\left(\|\widetilde{T}_t\| \|\widetilde{T}_{t_2}\|+\|\widetilde{T}_t^*\widetilde{T}_t+\widetilde{T}_t\widetilde{T}_t^*   \|   \right)+\frac{1}{4^2}w^2(\widetilde{T}_2)\\
		&\leq& \frac{1}{4}\left(\|T\| \|\widetilde{T}_t\|+\|T^*T+TT^*\| \right)\\
		&&+\frac{1}{4^2}\left(\|\widetilde{T}_t\| \|\widetilde{T}_{t_2}\|+\|\widetilde{T}_t^*\widetilde{T}_t+\widetilde{T}_t\widetilde{T}_t^*   \|   \right)\\
		&& +\frac{1}{4^3}\left(\|\widetilde{T}_{t_2}\| \|\widetilde{T}_{t_3}\|+\|\widetilde{T}_{t_2}^*\widetilde{T}_{t_2}+\widetilde{T}_{t_2}\widetilde{T}_{t_2}^*   \|   \right)+\frac{1}{4^3}w(\widetilde{T}_3^2)\\
		&\leq & \ldots\\
		&\leq& \sum_{n=1}^{\infty}\frac{1}{4^n}\left( \|\widetilde{T}_{t_{n-1}}\| \|\widetilde{T}_{t_{n}}\|+\|\widetilde{T}_{t_{n-1}}^*\widetilde{T}_{t_{n-1}}+\widetilde{T}_{t_{n-1}}\widetilde{T}_{t_{n-1}}^*\|    \right).
	\end{eqnarray*}
\end{proof}

Using Theorem \ref{iterate-1}, we obtain the following inequality.

\begin{corollary}\label{cor-2}
	Let $T\in B(\mathbb{H}).$ Then 
	\[w^2(T) \leq \frac{1}{2}\left[\|T^2\|^{\frac{1}{2}}\left(\frac{1}{2}\|T\|+\frac{1}{2}\|T^2\|^{\frac{1}{2}}\right)+\frac{1}{2}\|T^*T+TT^*\|  \right]. \]
\end{corollary}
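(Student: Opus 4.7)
The plan is to apply Theorem~\ref{theorem:upper3} at the distinguished parameter $t = \tfrac{1}{2}$, so that $\widetilde{T_t}$ collapses to the classical Aluthge transform $\widetilde{T}$, and then replace every occurrence of $\widetilde{T}$ in the resulting bound by a quantity involving $\|T^2\|$. Concretely, Theorem~\ref{theorem:upper3} with $t=\tfrac{1}{2}$ gives
\[w^2(T) \leq \tfrac{1}{4}w(\widetilde{T}^2) + \tfrac{1}{4}\|T\|\|\widetilde{T}\| + \tfrac{1}{4}\|T^*T+TT^*\|,\]
which is the cleanest entry point (and is equivalently the one-step truncation of the iterative bound in Theorem~\ref{iterate-1}, since its proof passes through this inequality).

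The first key step is to dispose of the term $w(\widetilde{T}^2)$: I would use the obvious chain $w(\widetilde{T}^2)\leq \|\widetilde{T}^2\|\leq \|\widetilde{T}\|^{2}$, which converts the numerical-radius term into a norm of $\widetilde{T}$ and makes the right-hand side homogeneous in quantities involving $\|\widetilde{T}\|$. The second key step is to invoke the well-known Yamazaki estimate $\|\widetilde{T}\|\leq \|T^{2}\|^{1/2}$ from \cite[(2.1)]{Y} (already recalled in Remark~\ref{remark1}); substituting it into both $\|\widetilde{T}\|^{2}$ and the product $\|T\|\|\widetilde{T}\|$ produces
\[w^2(T)\leq \tfrac{1}{4}\|T^{2}\| + \tfrac{1}{4}\|T\|\|T^{2}\|^{1/2} + \tfrac{1}{4}\|T^{*}T+TT^{*}\|.\]

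The remaining work is bookkeeping: factor $\|T^{2}\|^{1/2}$ out of the first two summands to obtain $\tfrac{1}{4}\|T^{2}\|^{1/2}\bigl(\|T\| + \|T^{2}\|^{1/2}\bigr)$, and then pull out an overall $\tfrac{1}{2}$ so that the bracket matches the symmetric form $\|T^{2}\|^{1/2}\bigl(\tfrac{1}{2}\|T\|+\tfrac{1}{2}\|T^{2}\|^{1/2}\bigr)+\tfrac{1}{2}\|T^{*}T+TT^{*}\|$ in the statement. There is no genuine obstacle in this argument; the only point requiring a little care is tracking the factors of $\tfrac{1}{2}$ and $\tfrac{1}{4}$ during the final repackaging so that the numerical constants in the statement are reproduced exactly.
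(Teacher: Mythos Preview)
Your argument is correct, and in fact more economical than the paper's. You start directly from Theorem~\ref{theorem:upper3} at $t=\tfrac{1}{2}$, bound $w(\widetilde{T}^2)\leq\|\widetilde{T}\|^2$ and then apply $\|\widetilde{T}\|\leq\|T^2\|^{1/2}$ once, arriving at the stated inequality in two strokes. The paper instead passes through the iterated bound of Theorem~\ref{iterate-1}: it isolates the $n=1$ summand, estimates every remaining summand ($n\geq 2$) by $\tfrac{3}{4^n}\|\widetilde{T}\|^2$ via $\|\widetilde{T}_n\|\leq\|\widetilde{T}_{n-1}\|$ and $\|A^*A+AA^*\|\leq 2\|A\|^2$, and then sums the resulting geometric tail to recover an additional $\tfrac{1}{4}\|T^2\|$. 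The two routes land at the identical expression $\tfrac{1}{4}\|T^2\|+\tfrac{1}{4}\|T\|\|T^2\|^{1/2}+\tfrac{1}{4}\|T^*T+TT^*\|$; what your shortcut buys is avoidance of the infinite series altogether, while the paper's route makes explicit that Corollary~\ref{cor-2} is a genuine consequence of Theorem~\ref{iterate-1} (as its placement suggests), even though the full strength of the iteration is not needed for this particular bound.
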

\begin{proof}
	Let $\widetilde{T}_{n}$ be the $n$-th iterated Aluthge transform. Then from Theorem \ref{iterate-1} $( \textit {for} ~~t=\frac{1}{2})$, we get
	\begin{eqnarray*}
	w^2(T)&\leq& \sum_{n=1}^{\infty}\frac{1}{4^n}\left( \|\widetilde{T}_{{n-1}}\| \|\widetilde{T}_{{n}}\|+\|\widetilde{T}_{{n-1}}^*\widetilde{T}_{{n-1}}+\widetilde{T}_{{n-1}}\widetilde{T}_{{n-1}}^*\|    \right)\\
	&=& \frac{1}{4}\left(\|T\| \|\widetilde{T}\|+\|T^*T+TT^*\|\right)+\sum_{n=2}^{\infty}\frac{1}{4^n}\left( \|\widetilde{T}_{{n-1}}\| \|\widetilde{T}_{{n}}\|+\|\widetilde{T}_{{n-1}}^*\widetilde{T}_{{n-1}}+\widetilde{T}_{{n-1}}\widetilde{T}_{{n-1}}^*\|    \right)\\
	&\leq& \frac{1}{4}\left(\|T\| \|\widetilde{T}\|+\|T^*T+TT^*\|\right)+\sum_{n=2}^{\infty}\frac{1}{4^n}\left( \|\widetilde{T}_{{n-1}}\| \|\widetilde{T}_{{n}}\|+2\|\widetilde{T}_{{n-1}}\|^2    \right)\\
	&\leq& \frac{1}{4}\left(\|T\| \|\widetilde{T}\|+\|T^*T+TT^*\|\right)+\sum_{n=2}^{\infty}\frac{1}{4^n}\left( 3\|\widetilde{T}\|^2    \right), ~~\textit{using} ~~\|\widetilde{T}_{n}\|\leq \|\widetilde{T}_{n-1}\|,~~ n\geq 2\\
	&\leq& \frac{1}{4}\left(\|T\| \|{T}^2\|^{\frac{1}{2}}+\|T^*T+TT^*\|\right)+\sum_{n=2}^{\infty}\frac{1}{4^n}\left( 3\|{T}^2\|    \right), ~~\textit{using} ~\|\widetilde{T}\|\leq \|T^2\|^{\frac{1}{2}}\\
	&=& \frac{1}{4}\left(\|T\| \|{T}^2\|^{\frac{1}{2}}+\|T^*T+TT^*\|\right)+ \frac{3}{4^2}\|{T}^2\| \sum_{n=0}^{\infty}\frac{1}{4^n}\\
	&=& \frac{1}{4}\left(\|T\| \|{T}^2\|^{\frac{1}{2}}+\|T^*T+TT^*\|\right)+ \frac{1}{4}\|{T}^2\|\\
	&=& \frac{1}{2}\left[\|T^2\|^{\frac{1}{2}}\left(\frac{1}{2}\|T\|+\frac{1}{2}\|T^2\|^{\frac{1}{2}}\right)+\frac{1}{2}\|T^*T+TT^*\|  \right].
	\end{eqnarray*}
\end{proof}

\begin{remark}   Our inequality in Corollary \ref{cor-2} is better than inequality (\ref{number1}), it follows from the fact that 
	\begin{eqnarray*}
		&& \frac{1}{2}\left[\|T^2\|^{\frac{1}{2}}\left(\frac{1}{2}\|T\|+\frac{1}{2}\|T^2\|^{\frac{1}{2}}\right)+\frac{1}{2}\|T^*T+TT^*\|  \right]\\
		&=& \frac{1}{4}\|T^2\|^{\frac{1}{2}}\|T\|+\frac{1}{4} \|T^2\|+ \frac{1}{4}\|T^*T+TT^*\|\\
		&\leq& \frac{1}{4}\|T^2\|^{\frac{1}{2}}\|T\|+\frac{1}{4} \|T^2\|+ \frac{1}{4}\|T^2\|+\frac{1}{4}\|T\|^2\\
		&\leq& \frac{1}{2}\|T^2\|^{\frac{1}{2}}\|T\|+\frac{1}{4} \|T^2\|+ \frac{1}{4}\|T\|^2\\
		&=& \left(\frac{1}{2}\|T\|+\frac{1}{2}\|T^2\|^{\frac{1}{2}}\right)^2.
	\end{eqnarray*}
	We also observe that  bound obtained  in Corollary \ref{cor-2} is sharper than that in the right hand inequality  of (\ref{number2}), if 
	$ \|T\|\|T^2\|^{\frac{1}{2}}+\|T^2\|\leq \|TT^*+T^*T\|.$
\end{remark}

Next we obtain an upper bound for the  numerical radius and give an example to show that this bound  improves on bound (\ref{number3}).

\begin{theorem}\label{theorem:upper4}
	Let $T\in B(\mathbb{H}).$ Then
	\[ w^4(T)\leq \frac{1}{16} \min_{t\in [0,1]}\left(w(\widetilde{T_t}^2)+\|T\|\|\widetilde{T_t}\|\right)^2+\frac{1}{8}w(T^2P+PT^2)+\frac{1}{16}\|P\|^2,\]
	where $P=T^{*}T+TT^{*}.$ In particular, \[ w^4(T)	\leq \frac{1}{16}\left(w({\widetilde{T}}^2)+\|T\|\|\widetilde{T}\|\right)^2+\frac{1}{8}w(T^2P+PT^2)+\frac{1}{16}\|P\|^2.\]
	\end{theorem}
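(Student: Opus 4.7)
The plan is to build on the proof of Theorem~\ref{theorem:upper3} by squaring the identity $4H_\theta^2 = A + P$ already used there, where $P := T^*T + TT^*$ and $A := e^{2{\rm i}\theta}T^2 + e^{-2{\rm i}\theta}{T^*}^2$; both $A$ and $P$ are self-adjoint. Since $H_\theta$ is hermitian, $\|H_\theta^4\| = \|H_\theta\|^4$, and Lemma~\ref{lemma:htheta} gives $w(T)^4 = \sup_\theta \|H_\theta\|^4$, so it suffices to estimate $\|H_\theta^4\|$ uniformly in $\theta$.

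Squaring the identity yields $16H_\theta^4 = (A+P)^2 = A^2 + (AP+PA) + P^2$, and the triangle inequality together with $\|A^2\|=\|A\|^2$ (which holds because $A$ is self-adjoint) gives
\[
16\|H_\theta\|^4 \leq \|A\|^2 + \|AP+PA\| + \|P\|^2.
\]
I would bound the two middle quantities separately. Since $A$ is hermitian, $\|A\| = r(A)$, and I would invoke the intermediate estimate
\[
r(A) \leq w(\widetilde{T_t}^2) + \|T\|\,\|\widetilde{T_t}\|,
\]
which is exactly what is produced (before the additional $\|P\|$ term is added) in the proof of Theorem~\ref{theorem:upper3} via the splitting $A = A_1B_1 + A_2B_2$ and Theorem~\ref{theorem:spectral2}. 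This bound holds for every $t \in [0,1]$, so squaring and minimising gives $\|A\|^2 \leq \min_{t \in [0,1]}\bigl(w(\widetilde{T_t}^2) + \|T\|\,\|\widetilde{T_t}\|\bigr)^2$.

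For the cross term, a direct expansion using $P^* = P$ gives
\[
AP + PA = e^{2{\rm i}\theta}(T^2P + PT^2) + e^{-2{\rm i}\theta}(T^2P + PT^2)^{*} = 2\,\textit{Re}\bigl(e^{2{\rm i}\theta}(T^2P + PT^2)\bigr),
\]
so Lemma~\ref{lemma:htheta} applied to the operator $T^2P + PT^2$ shows that $\|AP+PA\| \leq 2\,w(T^2P + PT^2)$, independent of $\theta$ and $t$.

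Inserting both estimates, dividing by $16$, and taking the supremum over $\theta \in \mathbb{R}$ produces the main inequality; the choice $t=\tfrac{1}{2}$ then gives the ``In particular'' statement. The main obstacle I anticipate is recognising that the bound on $\|A\|$ used inside Theorem~\ref{theorem:upper3} can be cleanly isolated from the $\|P\|$ contribution so as to appear squared here; once that is observed, everything else is algebraic manipulation of the already-developed identity.
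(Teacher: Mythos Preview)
Your proposal is correct and matches the paper's proof essentially line for line: the paper also squares $4H_\theta^2 = (e^{2{\rm i}\theta}T^2+e^{-2{\rm i}\theta}{T^*}^2)+P$, identifies the cross term as $2\,\textit{Re}\bigl(e^{2{\rm i}\theta}(T^2P+PT^2)\bigr)$, bounds the square of the first summand by $r^2(\cdot)$ and then invokes the estimate from the proof of Theorem~\ref{theorem:upper3} to obtain $\bigl(w(\widetilde{T_t}^2)+\|T\|\|\widetilde{T_t}\|\bigr)^2$, and finishes exactly as you describe.
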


\begin{proof}
	Since $H_{\theta} = \frac{1}{2}(e^{{\rm i} \theta} T + e^{-{\rm i} \theta} T^*)$ for all $\theta \in \mathbb{R}$, we have
	\begin{eqnarray*}
		4 {H_{\theta}}^2  &= & e^{2{\rm i} \theta} T^2 + e^{-2{\rm i} \theta} {T^*}^2 + P \\
		\Rightarrow 16 {H_{\theta}}^4  &= & \big(e^{2{\rm i} \theta} T^2 + e^{-2{\rm i} \theta} {T^*}^2\big)^2 +2\textit{Re}(e^{2{\rm i} \theta}(T^2P+PT^2))+ P^2.
	\end{eqnarray*}
	Hence, 
	\begin{eqnarray*}
		16 \|{H_{\theta}}\|^4  &\leq & \|e^{2{\rm i} \theta} T^2 + e^{-2{\rm i} \theta} {T^*}^2\|^2 +2\|\textit{Re}(e^{2{\rm i} \theta}(T^2P+PT^2))\|+ \|P\|^2 \\
		&\leq & r^2\big(e^{2{\rm i} \theta} T^2 + e^{-2{\rm i} \theta} {T^*}^2\big) +2 w(T^2P+PT^2)+ \|P\|^2,\\
		&& \,\,\,\,\,  r(S)=\|S\|~~ \textit{for hermitian operator}~~ S \\
		&=& r^2\big(e^{2{\rm i} \theta} U|T|U|T| + e^{-2{\rm i} \theta} |T|U^*|T|U^* \big) +2 w(T^2P+PT^2)+ \|P\|^2.
	\end{eqnarray*}
	Then using the same technique as  in Theorem \ref{theorem:upper3}, we get
	\[ \|{H_{\theta}}\|^4 \leq \frac{1}{16} \big(w(\widetilde{T_t}^2)+\|T\|\|\widetilde{T_t}\|\big)^2+\frac{1}{8}w(T^2P+PT^2)+\frac{1}{16}\|P\|^2.\]
	Taking supremum over $\theta \in \mathbb{R}$ in the above inequality and then using Lemma \ref{lemma:htheta}, we get
	\[w^4(T)\leq \frac{1}{16}\big(w(\widetilde{T_t}^2)+\|T\|\|\widetilde{T_t}\|\big)^2+\frac{1}{8}w(T^2P+PT^2)+\frac{1}{16}\|P\|^2.\]
	This holds for all $t\in [0,1]$,  and so taking minimum we get,
	\[w^4(T)\leq \frac{1}{16} \min_{t\in [0,1]}\big(w(\widetilde{T_t}^2)+\|T\|\|\widetilde{T_t}\|\big)^2+\frac{1}{8}w(T^2P+PT^2)+\frac{1}{16}\|P\|^2.\]
	Considering the case  $t=\frac{1}{2},$ we get 
	\[w^4(T)\leq \frac{1}{16}\big(w({\widetilde{T}}^2)+\|T\|\|\widetilde{T}\|\big)^2+\frac{1}{8}w(T^2P+PT^2)+\frac{1}{16}\|P\|^2.\]
\end{proof}

\begin{remark}
	We observe that as discussed in Remark 1, if $T^2=0$  or $T$ is a normaloid operator then   inequalities  in Theorem \ref{theorem:upper4} become equalities.
\end{remark}

We now give an example to show that the bound obtained  in Theorem \ref{theorem:upper4} improves on bound (\ref{number3})  obtained by Yamazaki in \cite [Th. 2.1]{Y}.

\begin {example}
We consider  $T=\left(\begin{array}{ccc}
0&2&0 \\
0&0&3\\
0&0&0
\end{array}\right).$
Then it is easy to see that $P=\left(\begin{array}{ccc}
4&0&0 \\
0&13&0\\
0&0&9
\end{array}\right), $ $|T|=\left(\begin{array}{ccc}
0&0&0 \\
0&2&0\\
0&0&3
\end{array}\right)$ and $U=\left(\begin{array}{ccc}
0&1&0 \\
0&0&1\\
0&0&0
\end{array}\right)$,
where $U$ is the partial isometry in the polar decomposition of $T$, i.e., $ T=U|T|$. So, 
\[\widetilde{T_t}=  |T|^tU|T|^{1-t}=\left(\begin{array}{ccc}
0&0&0 \\
0&0&2^t3^{1-t}\\
0&0&0
\end{array}\right).\]
Therefore, $w(\widetilde{T_t})=\frac{2^t3^{1-t}}{2}$, $\|\widetilde{T_t}\|=2^t3^{1-t} $, $\|P\|=13$ and $w(T^2P+PT^2)=39.$ So, the inequality obtained by us in Theorem \ref{theorem:upper4} gives $w(T)\leq 2.05076838.$ But inequality (\ref{number3})  obtained by Yamazaki in \cite [Th. 2.1]{Y} gives $w(T)\leq 2.11237244.$ 
\end{example}

Our next goal is to improve on both upper and lower bound of the numerical radius obtained by Kittaneh in \cite[Th. 1]{FK2}. Before doing so, we first give an alternative proof of the following theorem proved by Kittaneh in \cite [Th. 1] {FK2}.

\begin{theorem} \cite [Th. 1]{FK2} \label{theorem:upper6}
Let $T\in B(\mathbb{H})$, then 
\[\frac{1}{4}\|T^* T+TT^*\| \leq w^2(T)\leq \frac{1}{2}\|T^* T+TT^*\|.\]
\end{theorem}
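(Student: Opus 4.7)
The plan is to give an alternative proof of Theorem \ref{theorem:upper6} based on Lemma \ref{lemma:htheta} together with a single identity, namely
\[ H_\theta^2 + K_\theta^2 = \tfrac{1}{2}(T^*T + TT^*) \quad \text{for every } \theta \in \mathbb{R}, \]
which decouples from $\theta$ entirely. Note that $e^{i\theta}T = H_\theta + iK_\theta$, so
\[ T^*T = (e^{i\theta}T)^*(e^{i\theta}T) = H_\theta^2 + K_\theta^2 + i[H_\theta, K_\theta], \]
\[ TT^* = (e^{i\theta}T)(e^{i\theta}T)^* = H_\theta^2 + K_\theta^2 - i[H_\theta, K_\theta]. \]
Adding these and dividing by $2$ yields the claimed identity; the commutator pieces cancel and the $\theta$-dependence disappears. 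I would record this identity first, since both directions of the inequality flow from it in one line each.

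For the upper bound, I would argue that $H_\theta^2$ and $K_\theta^2$ are positive operators, so
\[ H_\theta^2 \leq H_\theta^2 + K_\theta^2 = \tfrac{1}{2}(T^*T + TT^*), \]
and therefore $\|H_\theta\|^2 = \|H_\theta^2\| \leq \tfrac{1}{2}\|T^*T + TT^*\|$. Taking the supremum over $\theta$ and invoking Lemma \ref{lemma:htheta} gives $w^2(T) \leq \tfrac{1}{2}\|T^*T+TT^*\|$.

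For the lower bound, I would use the triangle inequality in the opposite direction: for any $\theta$,
\[ \tfrac{1}{2}\|T^*T+TT^*\| = \|H_\theta^2 + K_\theta^2\| \leq \|H_\theta\|^2 + \|K_\theta\|^2. \]
Since Lemma \ref{lemma:htheta} gives both $\|H_\theta\| \leq w(T)$ and $\|K_\theta\| \leq w(T)$, the right-hand side is at most $2w^2(T)$, so $\tfrac{1}{4}\|T^*T+TT^*\| \leq w^2(T)$.

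The proof really has no obstacle beyond the routine algebraic verification of the identity at the start; the point is that once one observes that $H_\theta^2 + K_\theta^2$ is independent of $\theta$ and equals $\tfrac{1}{2}(T^*T+TT^*)$, both halves of the Kittaneh inequality drop out of Lemma \ref{lemma:htheta} in a single step each, bypassing the spectral/Cartesian-decomposition argument used in \cite[Th.~1]{FK2}.
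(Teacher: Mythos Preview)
Your proof is correct and follows essentially the same approach as the paper's own alternative proof: both establish the key identity $H_\theta^2 + K_\theta^2 = \tfrac{1}{2}(T^*T+TT^*)$ and then read off the two inequalities via Lemma~\ref{lemma:htheta}, using positivity of $K_\theta^2$ for the upper bound and the triangle inequality for the lower bound. The only differences are cosmetic (you derive the identity more explicitly via the commutator cancellation, and you present the upper bound before the lower bound).
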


\begin{proof} Since $H_{\theta} = \frac{1}{2}(e^{{\rm i} \theta} T + e^{-{\rm i} \theta} T^*)$ and $K_\theta=\frac{1}{2 {\rm i}}(e^{{\rm i} \theta} T - e^{-{\rm i} \theta} T^*)$ for all $\theta \in \mathbb{R}$, we have  
$ H^2_\theta+K^2_\theta =\frac{1}{2}(T^{*}T+TT^{*}) $ and so $ \frac{1}{2}\|T^{*}T+TT^{*}\| = \|H^2_\theta+K^2_\theta\|
\leq \|H_\theta\|^2+\|K_\theta\|^2 \leq 2w^2(T)$, using Lemma \ref{lemma:htheta}. Thus $\frac{1}{4}\|T^{*}T+TT^{*}\| \leq w^2(T).$							
This completes the proof of the first inequality. \\
Again, from  $H^2_\theta+K^2_\theta = \frac{1}{2}(T^{*}T+TT^{*})$ we get, $H^2_\theta - \frac{1}{2}(T^{*}T+TT^{*})=-K^2_\theta \leq 0.$
Thus $ H_{\theta} ^2 \leq  \frac{1}{2}(T^* T+TT^*) $  and so $ \|H_{\theta} ^2\| \leq  \frac{1}{2}\|T^* T+TT^*\|. $ Taking supremum over $\theta \in \mathbb{R}$  and then using Lemma \ref{lemma:htheta}, we get
$ w^2(T)\leq \frac{1}{2}\|T^* T+TT^*\|.$

\end{proof}

We now prove the desired inequalitiy which  improves on inequality (\ref{number2}) obtained by Kittaneh in \cite[Th. 1]{FK2}.

\begin{theorem} \label {theorem:upper7}
Let $T \in B(\mathbb{H}).$ Then 
\[\frac{1}{4}m\big(\left(\mbox{Re}(T^2)\right)^2\big)+\frac{1}{16}\|T^{*}T+TT^{*}\|^2  \leq w^4(T) \leq \frac{1}{2}w^2(T^2)+\frac{1}{8}\|T^{*}T+TT^{*}\|^2.\]
\end{theorem}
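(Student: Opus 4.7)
The plan is to extract both inequalities from a single operator identity tying $H_\theta^2$ and $K_\theta^2$ to $P := T^{*}T + TT^{*}$ and $R_\theta := \textit{Re}(e^{2{\rm i}\theta}T^2)$. Squaring the Cartesian forms of $H_\theta$ and $K_\theta$ (as in the proof of Theorem \ref{theorem:upper6}) gives $H_\theta^2 + K_\theta^2 = P/2$ together with the companion identity $H_\theta^2 - K_\theta^2 = R_\theta$, so that $4H_\theta^2 = P + 2R_\theta$ and $4K_\theta^2 = P - 2R_\theta$. Squaring each of these self-adjoint expressions and adding produces the key identity
\[16\bigl(H_\theta^4 + K_\theta^4\bigr) = 8R_\theta^{2} + 2P^{2}.\]

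For the upper bound, I would drop the nonnegative term $16K_\theta^4$ from the left, yielding $16H_\theta^4 \leq 8R_\theta^{2} + 2P^{2}$ in the positive-operator order. Monotonicity of the norm on positive operators, the triangle inequality, and the self-adjoint identities $\|R_\theta^{2}\| = \|R_\theta\|^2$, $\|P^{2}\| = \|P\|^2$ then give $16\|H_\theta\|^4 \leq 8\|R_\theta\|^2 + 2\|P\|^2$. Since $\|R_\theta\| = \|\textit{Re}(e^{2{\rm i}\theta}T^2)\| \leq w(T^2)$ by Lemma \ref{lemma:htheta}, taking the supremum over $\theta \in \mathbb{R}$ yields $w^4(T) \leq \tfrac{1}{2}w^2(T^2) + \tfrac{1}{8}\|P\|^2$.

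For the lower bound, specialize the identity to $\theta = 0$, so that $R_0 = \textit{Re}(T^2)$. Bounding the left side using Lemma \ref{lemma:htheta} gives $16\|H_0^4 + K_0^4\| \leq 16(\|H_0\|^4 + \|K_0\|^4) \leq 32\, w^4(T)$. On the right, $8R_0^{2} + 2P^{2}$ is a sum of two positive operators, and for such $X,Y \in B(\mathbb{H})$ the elementary inequality $\|X + Y\| \geq m(X) + \|Y\|$ holds (proved by picking a unit vector nearly achieving $\|Y\|$ and using $\langle X x, x\rangle \geq m(X)$). Applying this with $X = 8R_0^{2}$ and $Y = 2P^{2}$, together with $m(8R_0^{2}) = 8 m((\textit{Re}(T^2))^2)$ and $\|2P^{2}\| = 2\|P\|^2$, gives $32\, w^4(T) \geq 8\, m((\textit{Re}(T^2))^2) + 2\|P\|^2$, which rearranges to the desired lower bound.

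The main conceptual step is spotting the parallelogram-type identity $16(H_\theta^4 + K_\theta^4) = 8R_\theta^{2} + 2P^{2}$, which decouples the cross terms between $R_\theta$ and $P$ and converts a single fourth-power question into a pair of one-sided estimates. After that, the upper bound reduces to the monotonicity of the norm on positive operators, and the lower bound reduces to the Crawford-number-type estimate $\|X+Y\| \geq m(X) + \|Y\|$ for positive $X,Y$; the only delicate point is stating and justifying this last inequality explicitly so that the lower bound is airtight.
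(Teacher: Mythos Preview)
Your proof is correct and follows essentially the same route as the paper: both derive the identity $H_\theta^4 + K_\theta^4 = \tfrac{1}{8}\bigl[4(\textit{Re}(e^{2{\rm i}\theta}T^2))^2 + P^2\bigr]$, drop the $K_\theta^4$ term for the upper bound, and specialize to $\theta=0$ for the lower bound. The only cosmetic difference is that you package the lower-bound step as the norm inequality $\|X+Y\| \geq m(X) + \|Y\|$ for positive $X,Y$, whereas the paper unwinds this directly with inner products against a unit vector---the two arguments are identical in content.
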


\begin{proof} We first prove the left hand inequality. 
Let $x \in {\mathbb{H}}$ with $\|x\|=1$. Since $H_{\theta} = \frac{1}{2}(e^{{\rm i}\theta} T + e^{-{\rm i}\theta} T^*)$ and $K_\theta=\frac{1}{2{\rm i}}(e^{{\rm i}\theta} T - e^{-{\rm i}\theta} T^*)$ for all $\theta \in \mathbb{R}$, we have 
\begin{eqnarray*}
	\frac{1}{8}\left[4 \left(\textit{Re}(e^{2{\rm i} \theta} T^2)\right)^2+ \left(T^{*}T+TT^{*}\right)^2\right] & = & H^4_\theta+K^4_\theta \\
	\Rightarrow \frac{1}{2} \langle \left(\textit{Re}(e^{2{\rm i} \theta} T^2)\right)^2 x,x\rangle+ \frac{1}{8} \langle \left(T^{*}T+TT^{*}\right)^2x,x\rangle &=& \langle H^4_\theta x,x\rangle+ \langle K^4_\theta x,x\rangle \\
	\Rightarrow \frac{1}{2} \langle \left(\textit{Re}(e^{2{\rm i} \theta} T^2)\right)^2x,x\rangle + \frac{1}{8} \langle \left(T^{*}T+TT^{*}\right)^2x,x\rangle &\leq& 2w^4(T).
\end{eqnarray*}
This inequality holds for all $\theta \in \mathbb{R}$. So taking $\theta =0$, we get
\begin{eqnarray*}
	\frac{1}{2} \langle \left(\textit{Re}(T^2)\right)^2x,x\rangle+ \frac{1}{8}\langle \left(T^{*}T+TT^{*}\right)^2x,x\rangle &\leq& 2w^4(T)\\
	\Rightarrow \frac{1}{2} m\big((\textit{Re}(T^2))^2\big)+ \frac{1}{8} \langle \left(T^{*}T+TT^{*}\right)^2x,x\rangle &\leq& 2w^4(T).
\end{eqnarray*}
Taking supremum over $x\in \mathbb{H}, \|x\|=1$, we get
\begin{eqnarray*}
	\frac{1}{2} m\big(\left(\textit{Re}(T^2)\right)^2\big)+ \frac{1}{8}\|T^{*}T+TT^{*}\|^2 &\leq& 2w^4(T).
\end{eqnarray*}
Thus,
\begin{eqnarray*}
	\frac{1}{4} m\big(\left(\textit{Re}(T^2)\right)^2\big)+ \frac{1}{16}\|T^{*}T+TT^{*}\|^2 &\leq& w^4(T).
\end{eqnarray*}
This completes the proof of the left hand inequality. \\
We next prove the right hand inequality.  As before, we have 
\begin{eqnarray*}
	H^4_\theta+K^4_\theta &=& \frac{1}{8}\left[4 \left(\textit{Re}(e^{2{\rm i} \theta} T^2)\right)^2+ \left(T^{*}T+TT^{*}\right)^2\right]
\end{eqnarray*}
and so 
\[ \frac{1}{8}\left[4 \left(\textit{Re}(e^{2{\rm i} \theta} T^2)\right)^2+ \left(T^{*}T+TT^{*}\right)^2\right]-H^4_\theta=K^4_\theta\geq 0.\]
Hence, 
\begin{eqnarray*}
	H^4_\theta & \leq & \frac{1}{8}\left[4 \left(\textit{Re}(e^{2{\rm i} \theta} T^2)\right)^2+ \left(T^{*}T+TT^{*}\right)^2\right].\\
\end{eqnarray*}
Therefore,
\begin{eqnarray*}
	\|H_\theta\|^4 &\leq& \frac{1}{8}\left \|\left[4 \left(\textit{Re}(e^{2{\rm i} \theta} T^2)\right)^2+ \left(T^{*}T+TT^{*}\right)^2\right]\right\|\\
	&\leq& \frac{1}{8}\left[4 \|\textit{Re}(e^{2{\rm i} \theta} T^2)\|^2+ \|T^{*}T+TT^{*}\|^2\right]\\
	&\leq& \frac{1}{8}\left[4 w^2(T^2)+ \|T^{*}T+TT^{*}\|^2\right] ,~~  \mbox{using Lemma}~~ \ref{lemma:htheta}.
\end{eqnarray*}
Taking supremum over $\theta \in \mathbb{R}$ in the above inequality and then using Lemma \ref{lemma:htheta}, we get
\[ w^4(T)\leq \frac{1}{2}w^2(T^2)+\frac{1}{8}\|T^{*}T+TT^{*}\|^2.\]

\end{proof}

\begin{remark}
Clearly the left hand inequality obtained in Theorem \ref{theorem:upper7} is sharper than that of (\ref{number2}) obtained by Kittaneh in \cite[Th. 1]{FK2}. To claim the same for the right hand inequality we first note that $2\|T^2\|\leq \|T^{*}T+TT^{*}\|$ (see \cite{FK3}).   From the right hand inequality obtained in Theorem \ref{theorem:upper7} we get,
\begin{eqnarray*} 
	w^4(T)&\leq &\frac{1}{2}w^2(T^2)+\frac{1}{8}\|T^{*}T+TT^{*}\|^2\\
	&\leq &\frac{1}{2}\|T^2\|^2+\frac{1}{8}\|T^{*}T+TT^{*}\|^2\\
	& = & \frac{1}{8} (2\|T^2\|)^2+\frac{1}{8}\|T^{*}T+TT^{*}\|^2\\
	&\leq & \frac{1}{8} \|T^{*}T+TT^{*}\|^2+\frac{1}{8}\|T^{*}T+TT^{*}\|^2\\
	& = & \frac{1}{4} \|T^{*}T+TT^{*}\|^2.
\end{eqnarray*}
Thus our inequality is sharper than that of Kittaneh \cite[Th. 1]{FK2}.
\end{remark}
We now turn our attention to the bounds that are not comparable in general. The following numerical examples will illustrate the incomparability of some of the upper bounds of the numerical radius.
\begin{example}
(i) \textbf{Incomparabilty of $\frac{1}{2}\left(\|T\|+\|T^2\|^{\frac{1}{2}}\right)$ and $ \sqrt{\frac{1}{2} \|T^{*}T+TT^{*}\|}$}.\\
Consider $T=\left(\begin{array}{cc}
1&1 \\
0&-1
\end{array}\right)$ then $\frac{1}{2}\left(\|T\|+\|T^2\|^{\frac{1}{2}}\right) = \frac{3+\sqrt{5}}{4}$, whereas  $ \sqrt{\frac{1}{2} \|T^{*}T+TT^{*}\|} = \sqrt{\frac{3}{2}}$. Again if we consider  $T=\left(\begin{array}{cc}
1&2 \\
0&-1
\end{array}\right)$ then $\frac{1}{2}\left(\|T\|+\|T^2\|^{\frac{1}{2}}\right) = \frac{2+\sqrt{2}}{2}$, whereas  $\sqrt{\frac{1}{2} \|T^{*}T+TT^{*}\|} = \sqrt{3}$.
This shows that upper bounds obtained in (\ref{number1})  and (\ref{number2})  are not comparable.\\
(ii) \textbf{Incomparabilty of  $ \frac{1}{2}\left(\|T\|+\min_{t\in [0,1]}w(\widetilde{T_t})\right) $ and  $\left(\frac{1}{2}w^2(T^2)+\frac{1}{8}\|T^{*}T+TT^{*}\|^2\right)^{\frac{1}{4}}$.} \\
Consider $T=\left(\begin{array}{ccc}
0&2&0 \\
0&0&0\\
0&0&1
\end{array}\right)$ then $  \left(\frac{1}{2}w^2(T^2)+\frac{1}{8}\|T^{*}T+TT^{*}\|^2\right)^{\frac{1}{4}} =  \sqrt{\sqrt{\frac{5}{2}}}$, whereas \\  $ \frac{1}{2}\left(\|T\|+\min_{t\in [0,1]}w(\widetilde{T_t})\right) = \frac{3}{2}$.

 Again if we consider  $T=\left(\begin{array}{cc}
0&1 \\
0&0
\end{array}\right)$ then $  \left(\frac{1}{2}w^2(T^2)+\frac{1}{8}\|T^{*}T+TT^{*}\|^2\right)^{\frac{1}{4}} = \sqrt{\sqrt {\frac{1}{8}}}$,  whereas   $   \frac{1}{2}\left(\|T\|+\min_{t\in [0,1]}w(\widetilde{T_t})\right) =    \frac{1}{2}.$
This shows that the upper bounds obtained in (\ref{number4}) and Theorem \ref{theorem:upper7} are not comparable. We observe that inequality (\ref{number4}) is sharper than (\ref{number1}) and the inequality obtained in Theorem \ref{theorem:upper7} is sharper than (\ref{number2}). Similarly using the same matrices one can conclude that inequality (\ref{number2}) is not comparable with inequality (\ref{number3}) and (\ref{number4}). 
\end{example}



\end{document}